\documentclass[12pt]{amsart}
\usepackage{amsfonts}
\usepackage{amscd}
\usepackage{amsmath, amsfonts,amssymb,amsthm,mathrsfs,xypic}
\usepackage{graphicx}
\xyoption{curve}
\usepackage{fancybox}
\oddsidemargin=10pt \evensidemargin=6pt

\textwidth 15true cm \textheight 21.6true cm


\numberwithin{equation}{section}

\theoremstyle{plain}
\newtheorem{theorem}[equation]{Theorem}
\newtheorem{proposition}[equation]{Proposition}
\newtheorem{lemma}[equation]{Lemma}
\newtheorem{corollary}[equation]{Corollary}
\newcommand{\Hom}{\operatorname{Hom}}
\theoremstyle{definition}
\newtheorem{definition}[equation]{Definition}

\theoremstyle{remark}
\newtheorem{remark}[equation]{Remark}


\hyphenation{Grothen-dieck}
\hyphenation{com-mu-ta-tive}



\newcommand{\ul}{\underline}
\newcommand{\cok}{\mathrm{cok}}

\newcommand{\To}{\longrightarrow}


\def\C{\mathcal{C}}

\def\Ho{\mathsf{Ho}}
\def\D{\mathcal{D}}

\def\A{\mathcal{A}}
\def\T{\mathcal{T}}
\def\X{\mathcal{X}}
\def\Y{\mathcal{Y}}


\begin{document}
\title [\tiny{Priles of one-sided triangulated categories}]{Priles of one-sided triangulated categories and exact model categories }
\author [\tiny{Zhi-Wei Li}] {Zhi-Wei Li}
\thanks{\tiny{Date: July 28, 2014.}}
\thanks{\tiny{2010 {\it Mathematics Subject Classification}.  18E10, 18E30, 55U35.}}
\thanks{\tiny{ {\it Key words}. Exact model categories; Homotopy categories; Stable categories; One-sided triangulated categories.}}
\thanks{\tiny{The author is supported by JSNU12XLR025.}}


\begin{abstract}
We introduce the notion of a prile of one-sided triangulated categories. Roughly speaking, a prile  consists of two one-sided triangulated categories having a common full subcategory which inherits a pretriangulated structure from these ambient categories. The main example arises from exact model categories. This allows us to recover the pretriangulated structure of the homotoy category of an exact model category.

\end{abstract}

\maketitle

\setcounter{tocdepth}{1}

\section{Introduction}

The main purpose of this paper is to construct a pretriangulated category from a left triangulated category and a right triangulated category. This is motivated by the study of exact model categories.

The concept of an exact model category was introduced in \cite{Hovey02}. In brief, it is a weakly idempotent complete additive category which has both an exact structure and a model structure and the two structures are compatible. Exact model categories play more and more important roles in the study of algebraic triangulated categories \cite{Hovey02, Beligiannis01}, singularities categories \cite{Becker12}, relative homological algebras \cite{Beligiannis/Reiten07}, approximation theory \cite{Saorin/Stovicek} and algebraic geometry \cite{Gillespie12, Gillespie13}. For more background and overview about exact model categories we refer the reader to \cite{Hovey07, Stovicek}.

The foundational result of exact model categories is Hovey's correspondence \cite{Hovey02}: The model structure on a weakly idempotent complete exact category $\A$ that respects the exact structure is equivalent to two compatible complete cotorsion pairs on $\A$. Given an exact model category $\A$, let $\A_c, \A_{triv}, \A_f$ be the classes of cofibrant, trivial and fibrant objects, respectively. Denoted by $\omega=\A_c\cap\A_{triv}\cap \A_f$ and $\A_{cf}=\A_c\cap\A_f$, Hovey's correspondence implies that the homotopy category $\Ho(\A)$ of $\A$ is equivalent to the stable category $\A_{cf}/\omega$; see \cite{Beligiannis/Reiten07, Gillespie11, Becker12}.

The above interpretations motivate us to construct the homotopy theory of an exact model category $\A$ from its exact structure. Recall that D. Quillen has built the homotopy theory of a model category from its model structure;  \cite{Quillen67, Quillen69}. When $\A$ is a hereditary exact model category \cite{Gillespie11}, its homotopy theory can be constructed from its exact structure. In this case, the subcategory $\A_{cf}$ is a Frobenius category with $\omega$ as the injective-projective objects and the stable category $\A_{cf}/\omega$ has a triangulated structure induced by $\omega$ by Happel's Theorem \cite{Happel88} and the equivalence between $\A_{cf}/\omega$ and $\Ho(\A)$ is a triangle equivalence \cite{Becker12}.

For general case, our starting point is to characterise which kind of additive subcategories of one-sided triangulated categories inherit the corresponding one-sided triangulated structures. This is motivated by the results in \cite{ZWLi} where the author has established one-sided triangulated structures on $\A_f/\omega$ and $\A_c/\omega$ from $\omega$. We introduce the notations of a {\it  left coreflective functor} and a {\it  right reflective functor}. A left coreflective functor is a fully faithful functor from an additive category to a left triangulated category which has a right adjoint and satisfies additional condition; see Definition 2.3. A right reflective functor is defined dually. We prove that if $i:\X\to \C$ is a left coreflective functor then $\X$ has an induced left triangulated structure from $\C$ and if $i$ is a right reflective functor then $\X$ has an induced right triangulated structure from $\C$; see Proposition 2.4. In order to construct a pretriangulated structure on $\A_{cf}/\omega$ from the one-sided triangulated structures on $\A_f/\omega$ and $\A_c/\omega$, a {\it prile of one-sided triangulated categories} is defined. It is a triple $(\C_1, \X, \C_2)$ consisting of a left coreflective functor from $\X$ to $\C_1$ and a right reflective functor from $\X$ to $\C_2$ satisfying some compatible conditions; see Definition 2.6. The result about priles is the following, see Theorem 2.7 for details.
\vskip5pt
\noindent{\bf Theorem A: }  {\it \ Let $(\C_1, \X, \C_2)$ be a prile of one-sided triangulated categories. Then $\X$ inherit a pretriangulated structure from $\C_1$ and $\C_2$. }

\vskip5pt

We apply the above results to the one-sided triangulated categories $\A_f/\omega$ and $ \A_c/\omega$ and get the expected results:

\vskip5pt
\noindent{\bf Theorem B: }{\it \ Let $\A$ be an exact model category.
\vskip 5pt
$(i)$ \ \ $(\A_f/\omega, \A_{cf}/\omega,
 \A_c/\omega)$ is a prile of one-sided triangulated categories.
 \vskip5pt
 $(ii)$ \ $\A_{cf}/\omega$ has a pretriangulated structure inherit from $\A_f/\omega$ and $\A_c/\omega$.

 \vskip5pt
 $(iii)$ \ The pretriangulated structure on $\A_{cf}/\omega$ in $(ii)$ is equivalent to the one on $\Ho(\A)$ constructed by Quillen in \cite{Quillen67}.}
\vskip5pt

The statement $(i)$ is  Theorem 4.7, $(ii)$ is Corollary 4.8 and $(iii)$ is Corollary 4.10.

The contents of the paper are as follows. In Section 2, we introduce the notion of a prile of one-sided triangulated categories and prove Theorem A. In Section 3, we recall the standard material of exact model categories and their homotopy categories. Section 4 is aimed to prove Theorem B.

\section{Priles of one-sided triangulated categories}

\subsection{One-sided triangulated categories} \ Let $\C$ be an additive category and $\Omega$ an additive covariant endofunctor on $\C$. Let $\triangle$ be a class of left triangles of the form $\Omega(A)\stackrel{h}\to C\stackrel{g}\to B\stackrel{f}\to A$. Morphisms between left triangles are triples $(\gamma, \beta,\alpha)$ of morphisms which make the following diagram commutative:
\[
\xymatrix{
\Omega(A)\ar[r]^h\ar[d]_{\Omega(\alpha)} & C\ar[r]^{g}\ar[d]_{\gamma} & B\ar[d]_{\beta} \ar[r]^{f} & A\ar[d]_\alpha  \\
\Omega(A') \ar[r]^{h'} & C'\ar[r]^{g'} & B' \ar[r]^{f'} & A'.}
\]

\begin{definition} \cite[Definition 2.2]{Beligiannis/Marmaridis94} \ The pair $(\Omega, \triangle)$ is called a {\it left triangulated structure} on $\C$  if $\triangle$ is closed under isomorphisms and satisfies the following four axioms:
\vskip5pt
${\rm(LT1)}$ \ For any morphism $f: B\to A$ there is a left triangle in $\triangle$ of the form $\Omega(A)\to C\to B\stackrel{f}\to A$. For any object $A\in \C$, the left triangle $0\to A\stackrel{1_A}\to A\to 0$ is in $\triangle$.
\vskip5pt
${\rm (LT2)}$  (Rotation axiom)\ For any left triangle $\Omega(A)\stackrel{h}\to C\stackrel{g}\to B\stackrel{f}\to A$ in $\triangle$, the left triangle $\Omega(B)\stackrel{-\Omega(f)}\to\Omega(A)\stackrel{h}\to C \stackrel{g}\to B$ is also in $\triangle$.
\vskip5pt
${\rm (LT3)}$ \ Every diagram of the form
\[
\xymatrix{
\Omega(A)\ar[r]^h \ar[d]_{\Omega(\alpha)}& C\ar[r]^{g}\ar@{.>}[d] & B\ar[d]_\beta \ar[r]^{f} & A\ar[d]^\alpha  \\
\Omega(A') \ar[r]^{h'} &C'\ar[r]^{g'}& B' \ar[r]^{f'} & A'}
\]
whose rows are in $\triangle$ can be completed to a morphism of triangles.
\vskip5pt
${\rm (LT4)}$ (Octahedral axiom)\ Given two composable morphisms $g:C\to B $ and $f:B\to A$, there is a commutative diagram
\[
\xymatrix{
& \Omega(F)\ar[d]^{m\Omega(l)} &  & \\
\Omega(B)\ar[r]^m\ar[d]_{\Omega(f)} & D\ar[r]^{k}\ar[d]^{\alpha} & C\ar@{=}[d] \ar[r]^{g} & B\ar[d]^f  \\
\Omega(A) \ar[r]^{n}\ar@{=}[d] & E\ar[r]^{h}\ar[d]^\beta & C \ar[r]^{fg} \ar[d]^{g}& A\ar@{=}[d]\\
\Omega(A)\ar[r]^{i} & F\ar[r]^{l}& B\ar[r]^{f}& A}
\]
such that the rows and the second column from the left are triangles in $\triangle$.
\end{definition}

A {\it left triangulated category } is a triple $(\C, \Omega, \triangle)$ consisting of an additive category $\C$ and a left triangulated structure $(\Omega, \triangle)$ on $\C$. Sometimes, we suppress $\Omega$ and $\triangle$ and say that $\C$ is a left triangulated category.

 Dually, we can define the notions of a {\it right triangulated structure} on an additive category and  a {\it right triangulated category }.

\vskip5pt
In topology, the idea of an one-sided triangulated category even goes back to Heller's work in \cite{Heller60}. It is used by D. Quillen to define the homotopy theory of a model category; see \cite{Quillen67, Quillen69}. In algebra, one-sided triangulated categories arising from additive categories have also been studied by many authors; see \cite{Keller/Vossieck87, Beligiannis/Marmaridis94, Beligiannis01, Beligiannis/Reiten07, Liu/Zhu13}.

\begin{remark} \ If the endofunctor $\Omega$ is an autoequivalence, the left triangulated category $(\C, \Omega, \triangle)$ is a triangulated category in the sense of Verdier; see \cite{Verdier}.
\end{remark}

\subsection{Left coreflective and right reflective functors}

\begin{definition} Let $(\C, \Omega, \triangle)$ be a left triangulated category and $\X$ an additive category. We call a fully faithful functor $i: \X\to \C$ {\it left coreflective} if the following two conditions hold:
\vskip5pt
$(i)$ $i$ has a right adjoint $Q$ (called the {\it coreflector}).
\vskip5pt
$(ii)$ Given any morphism $f:B\to A$ in $\C$, let $\Omega(A)\to C\to B\stackrel{f}\to A$ and $\Omega iQ(A)\to D\to iQ(B)\stackrel{iQ(f)}\to iQ(A)$ be the left triangles in $\triangle$ corresponding to $f$ and $Q(f)$, respectively. Then $Q(t)$ is an isomorphism where the morphism $t$ is given by applying (LT3) of Definition 2.1 to the following commutative diagram of triangles:
\[
\xymatrix{
\Omega iQ(A)\ar[r] \ar[d]_{\Omega(\varepsilon_{_A})}& D\ar[r]\ar[d]_t & iQ(B)\ar[d]_{\varepsilon_{_B}} \ar[r]^{iQ(f)} & iQ(A)\ar[d]^{\varepsilon_{_A}}  \\
\Omega(A) \ar[r] &C\ar[r]& B \ar[r]^{f} & A}
\]
(where $\varepsilon:iQ\to {\rm id}_\C$ is the counit of the adjunction $(i,Q)$).
\end{definition}

If in addition, $\X$ is a subcategory of $\C$, we will call it a {\it left coreflective subcatebory} of $\C$.

Dually, one can define the notions of a {\it right reflective functor} and a {\it right reflective subcategory} of a right triangulated category.
\vskip5pt
For example, if $L:\C\to \C$ is an exact localization functor between triangulated categories, the subcategory $\ker L$ is left coreflective and ${\rm Im} L\to \C$ is right reflective; see \cite{BIK1}.

\vskip5pt
Let $i:\X\to \C$ be a left coreflective functor with $\C=(\C, \Omega, \triangle)$. Let $Q$ be the coreflector of $i$. A left triangle $Q\Omega i(G)\to E\to F\to G$ in $\X$ is said to be {\it distinguished}, if it is isomorphic to $Q\Omega i(A)\stackrel{Q(h)}\to Q(C)\stackrel{\eta^{-1}_{_B}Q(g)}\To B\stackrel{f}\to A$ for some $f:B\to A$ in $\X$ with $\Omega i(A)\stackrel{h}\to C\stackrel{g}\to i(B)\stackrel{i(f)}\to i(A)\in \triangle$ (where $\eta: {\rm id}_\X\to Qi $ is the unit of the adjunction $(i,Q)$, it is a natural isomorphism; see \cite[Theorem IV.3.1]{MacLane98}). We use $\triangle_\X$ to denote the class of distinguished left triangles in $\X$.


Dually, let $j:\Y\to \D$ be a right reflective functor with $\D=(\D, \Sigma, \bigtriangledown)$. Let $R$ be the {\it reflector}, i.e. a left adjoint of $j$. Then, we can define a class of distinguished right triangles $\bigtriangledown_\Y$ in $\Y$.

\vskip5pt
\begin{proposition} \ $(i)$ \ Let $i:\X\to \C$ be a left coreflective functor with $\C=(\C, \Omega, \triangle)$. Then $(Q\Omega i, \triangle_\X)$ is a left triangulated structure on $\X$.
\vskip5pt
$(ii)$ Let $j:\Y\to \D$ be a right reflective functor with $\D=(\D, \Sigma, \bigtriangledown)$. Then $(R\Sigma j, \bigtriangledown_\Y)$ is a right triangulated structure on $\Y$.
\end{proposition}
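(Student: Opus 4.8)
By duality it is enough to prove $(i)$; applying $(i)$ to the opposite categories yields $(ii)$. Write $\Lambda=Q\Omega i$ for the candidate loop functor on $\X$, an additive endofunctor since $i$ is additive and fully faithful and $Q$, a right adjoint, is additive. Recall that the unit $\eta:\id_\X\to Qi$ is a natural isomorphism, so by the triangle identities $Q\varepsilon$ is a natural isomorphism as well, and note the adjunction isomorphism $\Hom_\X(U,Q-)\cong\Hom_\C(i(U),-)$: it shows that $Q$ carries every left triangle of $\triangle$ to a sequence in $\X$ that becomes exact after applying any $\Hom_\X(U,-)$, because the corresponding sequence $\Hom_\C(i(U),-)$ is the long exact sequence of a genuine left triangle in $\C$. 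The class $\triangle_\X$ is closed under isomorphism by definition, so it remains to verify (LT1)–(LT4).

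The whole argument runs on one reformulation of condition $(ii)$ of Definition 2.3, which I would record as a lemma: for every morphism $f:B\to A$ in $\C$ and every left triangle $\Omega(A)\to C\to B\xrightarrow{f}A$ in $\triangle$, the image under $Q$ is isomorphic, as a left triangle over $\Lambda$, to the distinguished triangle of $\triangle_\X$ attached to the $\X$-morphism $Q(f)$. When the target $A$ lies in the image of $i$ this is immediate: the lift used in the definition of $\triangle_\X$ is already $i$ of an $\X$-morphism, and one only transports the last two terms along $\eta$ and uses naturality of $\eta$ to recover $f$. In general one feeds the counit square of Definition 2.3$(ii)$ through $Q$; the resulting comparison of triangles is an isomorphism on the three rightmost terms, since $Q(t)$ is invertible by hypothesis and $Q\varepsilon$ is invertible. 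Promoting this to an isomorphism of triangles—in particular at the $\Lambda$-entry, where the comparison map is $Q\Omega\varepsilon_A$—is the delicate point, to be settled using the exactness of $\Hom_\X(U,Q-)$ on both rows and, if needed, condition $(ii)$ at the shifts.

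Granting the lemma, the verification of the remaining axioms follows the uniform pattern: lift the given $\X$-data along $i$, invoke the corresponding axiom in $\C$, apply $Q$, and reinterpret the output in $\X$ by means of the lemma and $\eta$. For (LT1) I complete $i(f)$ to a left triangle in $\C$ and apply $Q$; the trivial triangle comes from the zero morphism $A\to 0$. For (LT3) a commutative square of base morphisms maps under $i$ to a commutative square in $\C$, which I fill in using (LT3) for $\C$; applying $Q$ and naturality of $\eta$ yields the required connecting morphism. For the rotation axiom (LT2) I rotate the lifted triangle of $i(f)$ by (LT2) for $\C$, obtaining the $\C$-triangle of the connecting morphism $g:C\to i(B)$; since its target $i(B)$ lies in the image of $i$, the lemma applies in its easy form and exhibits the rotated triangle, with the sign on $\Lambda(f)$ intact, as the distinguished triangle of $\eta_{B}^{-1}Q(g)$.

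The octahedral axiom (LT4) is where I expect the real difficulty. I would lift a pair of composable $\X$-morphisms to composable morphisms in $\C$, apply (LT4) for $\C$, and push the whole octahedral diagram through $Q$; its commutativities and identity columns survive because $Q$ is a functor, and the three outer rows are handled by the lemma in its easy form, their base objects lying in the image of $i$. The obstacle is the second column from the left: it is the $\C$-triangle of a morphism whose target is a mapping cone, hence in general not in the image of $i$, so recognising $Q$ of this column as a distinguished triangle of $\X$ requires the lemma in its full generality. This is exactly the point at which condition $(ii)$ is indispensable, and the crux is to confirm that the comparison supplied by $(ii)$ is an isomorphism at the $\Lambda$-entry as well as on the three evident terms. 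Once this column is identified the remaining compatibilities of the octahedron transfer to $\X$ formally, completing (LT4) and hence the proof of $(i)$.
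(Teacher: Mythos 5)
Your overall strategy is the paper's: the same central lemma (that $Q$ carries a left triangle of $\triangle$ over a morphism $f$ to the distinguished triangle of $Q(f)$, up to the identifications supplied by $\eta$ and $\varepsilon$), the same quick dispatch of (LT1) and (LT3), and the same reduction of the octahedron to identifying the second column. But the two places where condition $(ii)$ of Definition 2.3 actually has to work are exactly the two places your write-up leaves open, and one of them you have misdiagnosed. Your claim that (LT2) needs only the ``easy form'' of the lemma is wrong: rotating the lifted triangle of $i(f)$ produces the triangle over the connecting morphism $g\colon C\to i(B)$, and while the target $i(B)$ is in the image of $i$, the source $C$ is a cone and in general is not, so $g$ is not $i$ of an $\X$-morphism. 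The distinguished triangle of $\eta_{B}^{-1}Q(g)$ is built from a completion of $iQ(g)$, whose third term is $iQ(C)$ rather than $C$, and comparing that completion with the rotated triangle via the counit is precisely an application of Definition 2.3$(ii)$ — one needs $Q(t)$ invertible for the comparison $t$ of the cones. This is in fact the \emph{first} point in the whole proof where the hypothesis is used; in the paper the general form of your lemma is then proved ``similarly to (LT2),'' not the other way round.

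Second, the ``delicate point'' you flag at the $\Lambda$-entry of the lemma is left unsettled, and the route you gesture at (establishing that $Q\Omega\varepsilon_{A}$ is invertible by exactness of $\Hom_{\X}(U,Q-)$) is neither needed nor how it is resolved. As you have stated the lemma, $Q$ of a triangle over $f\colon B\to A$ begins with $Q\Omega(A)$, which is not $\Lambda(Q(A))=Q\Omega iQ(A)$ unless $A$ lies in the image of $i$, so the resulting sequence is not a left triangle over $\Lambda$ at all. The correct statement builds the correction into the first map: one shows that $Q\Omega iQ(A)\xrightarrow{Q(h\Omega(\varepsilon_{A}))}Q(C)\xrightarrow{Q(g)}Q(B)\xrightarrow{Q(f)}Q(A)$ lies in $\triangle_{\X}$, so that the comparison with the distinguished triangle of $Q(f)$ is the identity (respectively $\Lambda$ of an isomorphism) at the first entry, the remaining entries being handled by $Q(t)$ and $Q(\varepsilon)$ being invertible. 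In the (LT2) instance this is automatic because $\varepsilon_{i(B)}=i(\eta_{B}^{-1})$, whence $Q\Omega(\varepsilon_{i(B)})=\Lambda(\eta_{B}^{-1})$. Until the lemma is reformulated this way and proved, both (LT2) and the second column of the octahedron in (LT4) — the only nontrivial steps — remain open.
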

\begin{proof}\  $(i)$ \ We check the axioms of Definition 2.1 one by one.

  (LT1) \ For any morphism $f: B\to A$ in $\X$, by the construction of $\triangle_\X$, there is a left triangle $Q\Omega i(A)\to Q(C)\to B\stackrel{f}\to A\in \triangle_\X$. For any object $A\in \X$, $0\to Qi(A)\stackrel{\eta^{-1}_{_A}}\to A\to 0\in \triangle_{\X}$ and then $0\to A\stackrel{1_A}\to A\to 0\in \triangle_{\X}$ since $\eta_{_A}$ is an isomorphism.

(LT2) \ By the construction of $\triangle_\X$, it is enough to check that $$Q\Omega i(B) \stackrel{-Q\Omega i(f)}\To Q\Omega i(A)\stackrel{Q(h)}\to Q(C)\stackrel{\eta_{_B}^{-1}Q(g)}\to B \in \triangle_\X$$
for $Q \Omega i(A)\stackrel{Q(h)}\to Q(C)\stackrel{\eta_{_B}^{-1}Q(g)}\to B\stackrel{f}\to A\in \triangle_\X $.
Embedding the morphism $iQ(g)$ into a left triangle $\Omega iQi(B)\stackrel{m}\to D\stackrel{l}\to iQ(C)\stackrel{iQ(g)}\to iQi(B)$ in $\triangle$, there is a commutative diagram of left triangles in $\triangle$:
\[
\xymatrix{
\Omega  iQi(B)\ar[r]^{\ \ \ \ m} \ar[d]_{\Omega(\varepsilon_{_{i(B)}})}& D\ar[r]^l\ar[d]_t & iQ(C)\ar[d]_{\varepsilon_{_C}} \ar[r]^{iQ(g)} & iQi(B)\ar[d]^{\varepsilon_{_{i(B)}}}  \\
\Omega i(B) \ar[r]^{-\Omega i(f)} & \Omega i(A)\ar[r]^h & C \ar[r]^g & i(B).}
\]
The above diagram induces an isomorphism of left triangles in $\X$:
\[
\xymatrix{
Q\Omega i Qi(B)\ar[r]^{\ \ \ \ Q(m)} \ar[d]_{Q\Omega i(\eta_{_B}^{-1})}& Q(D)\ar[r]^{\eta_{_{Q(C)}}^{-1}Q(l)}\ar[d]_{Q(t)} & Q(C)\ar@{=}[d] \ar[r]^{Q(g)} & Qi(B)\ar[d]^{\eta_{_B}^{-1}}  \\
Q\Omega i(B) \ar[r]^{-Q\Omega i(f)} & Q\Omega i(A)\ar[r]^{Q(h)} & Q(C) \ar[r]^{\eta_{B}^{-1}Q(g)} & B}
\]
where $Q(t)$ is an isomorphism by assumption and the commutativity of the diagram is by noting that $\varepsilon_{_{i(B)}}=i(\eta_{_B}^{-1})$ and $\eta_{_{Q(C)}}^{-1}=Q(\varepsilon_{_C})$. Thus the second row is a left triangle in $\triangle_\X$ since the first one is.

(LT3)\ Without loss of generality, we may only consider the following diagram of left triangles in $\triangle_\X$
\[
\xymatrix{
Q\Omega i(A)\ar[r]^{Q(h)} \ar[d]_{Q\Omega i(\alpha)}& Q(C)\ar[r]^{\ \ \eta_{_{B}}^{-1}Q(g)} & B\ar[d]^{\beta} \ar[r]^{f} & A\ar[d]^{\alpha}  \\
Q\Omega i(A') \ar[r]^{Q(h')} & Q(C')\ar[r]^{\ \ \eta_{_{B'}}^{-1}Q(g')} & B' \ar[r]^{f'} & A'.}
\]
By the axiom (LT3) of $\triangle$, we have a commutative diagram of left triangles in $\triangle$:
\[
\xymatrix{
\Omega i(A)\ar[r]^{h} \ar[d]_{\Omega i(\alpha)}& C\ar[r]^{g} \ar[d]^l & i(B)\ar[d]^{i(\beta)} \ar[r]^{i(f)} & A\ar[d]^{i(\alpha)}  \\
\Omega i(A') \ar[r]^{h'} & C'\ar[r]^{g'} & i(B') \ar[r]^{i(f')} & i(A').}
\]
Then $Q(l)$ is the desired filler.

(LT4) \ We claim that given any left triangle $\Omega(A)\stackrel{h}\to C\stackrel{g}\to B\stackrel{f}\to A$ in $\triangle$, the left triangle $Q\Omega iQ(A)\stackrel{Q(h\Omega(\varepsilon_{_A}))}\To Q(C)\stackrel{Q(g)}\to Q(B)\stackrel{Q(f)}\to Q(A) $ is in $\triangle_\X$. This can be proved similarly to the proof of (LT2). Given two composable morphisms $C\stackrel{g}\to B$ and $B\stackrel{f}\to A$ in $\X$, applying the (LT4) axiom of $\triangle$ to $i(f)$ and $i(g)$ we get a commutative diagram of left triangles in $\C$:
\[
\xymatrix{
& \Omega(F)\ar[d]^{m\Omega(l)} &  & \\
\Omega i(B)\ar[r]^m\ar[d]_{\Omega(f)} & D\ar[r]^{k}\ar[d]^{\alpha} &i(C)\ar@{=}[d] \ar[r]^{i(g)} & i(B)\ar[d]^{i(f)}  \\
\Omega i(A) \ar[r]^{n}\ar@{=}[d] & E\ar[r]^{h}\ar[d]^\beta & i(C) \ar[r]^{i(fg)} \ar[d]^{i(g)}& i(A)\ar@{=}[d]\\
\Omega i(A)\ar[r]^{s} & F\ar[r]^{l}& i(B)\ar[r]^{i(f)}& i(A)}
\]
which induces the desired commutative diagram of left triangles in $\triangle_\X$
\[
\xymatrix{
& Q\Omega i Q(F)\ar[d]^{Q(m)Q\Omega i(\eta_{_B}^{-1}Q(l))} &  & \\
Q\Omega i(B)\ar[r]^{Q(m)}\ar[d]_{Q\Omega i(f)} & Q(D)\ar[r]^{\ \ \eta_{_C}^{-1}Q(k)}\ar[d]^{Q(\alpha)} &C\ar@{=}[d] \ar[r]^{g} & B\ar[d]^{f}  \\
Q\Omega i(A) \ar[r]^{Q(n)}\ar@{=}[d] & Q(E)\ar[r]^{\ \ \eta_{_C}^{-1}Q(h)}\ar[d]^{Q(\beta)} & C \ar[r]^{fg} \ar[d]^{g}& A\ar@{=}[d]\\
Q\Omega i(A)\ar[r]^{Q(s)} & Q(F)\ar[r]^{\ \ \eta_{_B}^{-1}Q(l)}& B\ar[r]^{f}& A}
\]
where the second column is a left triangle in $\triangle_\X$ is by the above claim and the equality $$l\varepsilon_{_F}=\varepsilon_{_{i(B)}}iQ(l)=i(\eta_{_B}^{-1})iQ(l).$$

$(ii)$ can be proved dually. \end{proof}

\subsection{Priles of one-sided categories}
We first recall the notation of a pretriangulated category in \cite[Definition 4.9]{Beligiannis01} and \cite[Definition 6.5.1]{Hovey99}.
\begin{definition} \ A {\it pretriangulated structure} on an additive category $\C$ is a quadruple $(\Sigma, \Omega, \triangle, \bigtriangledown)$ such that
\vskip5pt
$(i)$ \ $(\Sigma,\Omega)$ is an adjoint pair with unit $\eta$ and counit $\varepsilon$.
\vskip5pt
$(ii)$ \ $(\Omega, \triangle)$ is a left triangulated structure on $\C$ and $(\Sigma, \bigtriangledown)$ is a right triangulated structure on $\C$.
 \vskip5pt
 $(iii)$ \ For any diagram in $\C$ with commutative left square
\[
\xymatrix{
A\ar[r]^f\ar[d]_{\alpha} & B\ar[r]^g\ar[d]_{\beta} &C\ar@{.>}[d]_{\delta} \ar[r]^h & \Sigma(A)\ar[d]^{\varepsilon_{C'}\Sigma(\alpha)}    \\
\Omega(C') \ar[r]^{f'} & \Omega(A')\ar[r]^{g'} & B' \ar[r]^{h'} & C' }
\]
where the upper row is in $\bigtriangledown$ and the lower row is in $\triangle$, there exists a morphism $\delta$ making the diagram commutative.
 \vskip5pt
 $(iv)$ \ For any diagram in $\C$ with commutative right square
\[
\xymatrix{
A\ar[r]^f\ar[d]_{\Omega(\alpha)\eta_{A}} & B\ar[r]^g\ar@{.>}[d]_{\delta} &C\ar[d]_{\beta} \ar[r]^h & \Sigma(A)\ar[d]_{\alpha}    \\
\Omega(C') \ar[r]^{f'} & \Omega(A')\ar[r]^{g'} & B' \ar[r]^{h'} & C' }
\]
where the upper row is in $\bigtriangledown$ and the lower row is in $\triangle$, there exists a morphism $\delta$ making the diagram commutative.
\end{definition}

An additive category $\C$ is called a {\it pretriangulated category} if there is a pretriangulated structure on $\C$.

\begin{definition} \ Let $\C_1=(\C_1,\Omega, \triangle)$ be a left triangulated category, $\C_2=(\C_2, \Sigma, \bigtriangledown)$ a right triangulated category and $\X$ an additive category. Let $i:\X\to \C_1$ be a left coreflective functor and $j:\X\to \C_2$ a right reflective functor. The triple $(\C_1, \X, \C_2)$ is called a {\it prile of one-sided triangulated categories} if the following conditions hold:
\vskip5pt
$(i)$ \ $(Q\Omega i, R\Sigma j)$ is an adjoint pair, where $Q$ is the coreflector of $i$ and $R$ the reflector of $j$.
\vskip5pt
$(ii)$ \ Let $\triangle_{\X}$ be the class of left triangles and $\bigtriangledown_\X$ the class of right triangles of $\X$ as constructed in Proposition 2.4. For any diagram
\[
\xymatrix{
Q\Omega i (A)\ar[r]^{\ \ \ h}\ar@{=}[d] & C \ar[r]^{g'}\ar@{=}[d] &D\ar@{.>}[d]_{s} \ar[r]^{f'\ \ \ \ \ \ } & R\Sigma jQ\Omega i(A)\ar[d]^{\varepsilon_{_A}}   \\
Q \Omega i(A) \ar[r]^{\ \ \ h} & C\ar[r]^{g} & B \ar[r]^{f} & A }
\]
with the upper row is in $\bigtriangledown_\X$ and the lower row is in $\triangle_\X$ (where $\varepsilon$ is the counit of the adjunction $(Q\Omega i, R\Sigma j)$), there exists a morphism $s$ such that the diagram commutes.

\vskip5pt
$(iii)$ \ For any diagram
\[
\xymatrix{
X\ar[r]^{l}\ar[d]_{\eta_{_X}} & Y \ar[r]^{m}\ar@{.>}[d]_{t} &Z\ar@{=}[d] \ar[r]^{n\ \ \ \ } & R\Sigma j(X)\ar@{=}[d]   \\
Q \Omega i R\Sigma j(X) \ar[r]^{\ \ \ \ \ \ \ \ \ l'} & U\ar[r]^{m'} & Z \ar[r]^{n\ \ \ \ } & R\Sigma j(X) }
\]
with the upper row is in $\bigtriangledown_\X$ and the lower row is in $\triangle_\X$ (where $\eta$ is the unit of the adjunction $(Q\Omega i, R\Sigma j)$), there exists a morphism $t$ such that the diagram commutes.
\end{definition}

For example, any recollement $(\T_1,\T,\T_2)$ of triangulated categories gives rise to a prile of one-sided triangulated categories $(\T, \T_1,\T)$.

By the definition of a prile of one-sided triangulated categories and Proposition 2.4, it is not difficult to prove the following result:

\begin{theorem} \ Let $(\C_1, \X, \C_2)$ be a prile of one-sided triangulated categories. Then $(Q\Omega i, R\Sigma j, \triangle_\X, \bigtriangledown_\X)$ is a pretriangulated structure on $\X$.
\end{theorem}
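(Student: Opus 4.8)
The plan is to verify the four defining conditions (i)--(iv) of Definition 2.5 for the quadruple $(Q\Omega i, R\Sigma j, \triangle_\X, \bigtriangledown_\X)$ on $\X$. Condition (i), that $(Q\Omega i, R\Sigma j)$ is an adjoint pair, is immediate: it is postulated verbatim as part (i) of the prile definition (Definition 2.6). Condition (ii), that $(Q\Omega i, \triangle_\X)$ is a left triangulated structure and $(R\Sigma j, \bigtriangledown_\X)$ is a right triangulated structure, is exactly the content of Proposition 2.4(i) and (ii), applied to the left coreflective functor $i:\X\to\C_1$ and the right reflective functor $j:\X\to\C_2$ respectively. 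So the two one-sided triangulations are already in hand, and the adjunction relating their shift functors is given.

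The substance of the theorem is therefore the compatibility axioms (iii) and (iv) of Definition 2.5, which require filling maps in mixed diagrams whose top row is a distinguished right triangle and whose bottom row is a distinguished left triangle. My first step would be to match the shapes: Definition 2.5(iii) asks for a filler in a diagram with commutative left square and rightmost vertical map $\varepsilon_{C'}\Sigma(\alpha)$, while Definition 2.6(ii) provides precisely such a filler in the special case where the two rows share the same initial segment $Q\Omega i(A)\to C$ (with the leftmost two verticals being identities) and the rightmost map is the counit $\varepsilon_{_A}$ of the adjunction $(Q\Omega i, R\Sigma j)$. The task is to reduce the general diagram of Definition 2.5(iii) to this special pasting configuration. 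I would do this by the standard technique: given an arbitrary commutative left square $\beta f=f'\alpha$ (in the one-sided sense), use the rotation axiom (LT2)/(RT2) together with the morphism axioms (LT3)/(RT3) of the two structures to pull the given verticals $\alpha,\beta$ back so that the outer diagram factors through an instance in which the left verticals become identities; the existence of the filler in that normalized instance is then supplied directly by Definition 2.6(ii), and transporting it back along the factorization yields the desired $\delta$. Condition (iv) is handled dually, reducing the diagram with commutative right square and leftmost vertical $\Omega(\alpha)\eta_A$ to the normalized form covered by Definition 2.6(iii), where the rightmost two verticals are identities and the leftmost map is the unit $\eta_{_X}$.

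The main obstacle I anticipate is bookkeeping the natural-transformation identities that make the reductions in the previous paragraph legitimate. Because $\X$'s shift functors are the composites $Q\Omega i$ and $R\Sigma j$ rather than honest suspension/loop functors, the maps appearing in the axioms of Definition 2.5 are wrapped in the units $\eta$ and counits $\varepsilon$ of \emph{two} different adjunctions: the adjunctions $(i,Q)$ and $(j,R)$ used to build the one-sided structures, and the adjunction $(Q\Omega i, R\Sigma j)$ appearing in Definition 2.6. Verifying that the rightmost vertical $\varepsilon_{C'}\Sigma(\alpha)$ of Definition 2.5(iii), when expressed in $\X$, really agrees with the counit map appearing in Definition 2.6(ii)—and similarly for the unit in the dual case—will require carefully chasing the triangle identities and the naturality squares, exactly the kind of compatibility verification that already appears in the proof of Proposition 2.4 (for instance the identifications $\varepsilon_{_{i(B)}}=i(\eta_{_B}^{-1})$ and $\eta^{-1}_{_{Q(C)}}=Q(\varepsilon_{_C})$ used there). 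Once these identifications are pinned down, the fillers are produced mechanically by (LT3), (RT3) and the two prile axioms, and no genuinely new idea is needed; this is why the theorem can be asserted, as the authors note, to be "not difficult to prove."
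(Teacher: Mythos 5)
The paper offers no proof of Theorem 2.7 at all --- it is merely asserted to follow ``without difficulty'' from Definition 2.6 and Proposition 2.4 --- so there is no argument to compare yours against line by line; your outline is the evidently intended one, and it does go through. Conditions $(i)$ and $(ii)$ of Definition 2.5 are, as you say, verbatim Definition 2.6$(i)$ and Proposition 2.4. For condition $(iii)$ the reduction is actually cleaner than you anticipate: no rotation and no unit/counit chasing is needed. Writing $\Omega=Q\Omega i$ and $\Sigma=R\Sigma j$, suppose given $\alpha:A\to\Omega(C')$ and $\beta:B\to\Omega(A')$ with $\beta f=f'\alpha$, a right triangle $A\xrightarrow{f}B\xrightarrow{g}C\xrightarrow{h}\Sigma(A)$ and a left triangle $\Omega(C')\xrightarrow{f'}\Omega(A')\xrightarrow{g'}B'\xrightarrow{h'}C'$. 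First apply Definition 2.6$(ii)$ to this left triangle together with a right triangle $\Omega(C')\xrightarrow{f'}\Omega(A')\xrightarrow{g''}D\xrightarrow{h''}\Sigma\Omega(C')$ on its first map $f'$ (which exists by the dual of (LT1)); this yields $s:D\to B'$ with $sg''=g'$ and $h's=\varepsilon_{C'}h''$. Then apply the dual of (LT3) to the commutative square $\beta f=f'\alpha$ to obtain $\gamma:C\to D$ with $\gamma g=g''\beta$ and $h''\gamma=\Sigma(\alpha)h$. The composite $\delta=s\gamma$ is the required filler, since $\delta g=sg''\beta=g'\beta$ and $h'\delta=\varepsilon_{C'}h''\gamma=\varepsilon_{C'}\Sigma(\alpha)h$. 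The counit appearing in Definition 2.5$(iii)$ is by construction that of the adjunction $(Q\Omega i,R\Sigma j)$ of Definition 2.6$(i)$, so the identification you worry about is automatic rather than a matter of triangle identities. Condition $(iv)$ is strictly dual, using Definition 2.6$(iii)$ applied to the left triangle generated by $h:C\to\Sigma(A)$ together with (LT3).
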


\begin{remark}\ Given a prile of one-sided triangulated categories $(\C_1,\X,\C_2)$, when we say that $\X$ inherits a pretriangulated structure from $\C_1$ and $\C_2$, we always mean that the pretriangulated structure is given as in Theorem 2.7.
\end{remark}

\section{Exact model categories and their homotopy categories}
This section is aimed to review the standard material of exact model categories and their homotopy categories.

\subsection{Model categories}\ Recall that in a model category $\A$, there are three classes of morphisms, called {\it cofibrations, fibrations} and {\it weak equivalences}. We will denote them by $\C of, \mathcal{F} ib$ and $\mathcal{W} e$, respectively. A morphism which is both a weak equivalence and a (co-)fibration is called an {\it acyclic $($co-$)$fibration}.  For details about model categories, we refer the reader to \cite[Chapter I]{Quillen67}, \cite{Dwyer/Spalinski95}, \cite[Chapter 1 ]{Hovey99} and \cite[Chapter 8]{Hirschhorn03}.

Given a model category, an object $A\in \A$ is called {\it cofibrant} if $0\to A\in \mathcal{C} of$, it is called {\it fibrant} if $A\to 0\in \mathcal{F}ib$, and it is called {\it trivial} if $0\to A\in \mathcal{W}e$. We use $\A_c, \A_f, \A_{triv}$ to denote the classes of cofibrant, fibrant, and trivial objects, respectively. Each object of $\A_{cf}:=\A_c\cap \A_f$ is called {\it bifibrant}.

\subsection{Exact model categories}
The concept of an exact category is due to D. Quillen \cite{Quillen73}, a simple axiomatic description can be founded in \cite[Appendix A]{Keller90}.  Roughly speaking {\it an exact category } is an additive category $\A$ equipped with a class $\mathcal{E}$ of {\it kernel-cokernel sequences} $A\stackrel{s}\to B\stackrel{t} \to C$ in $\A$ such that $s$ is the kernel of $t$ and $t$ is the cokernel of $s$. The class $\mathcal{E}$ satisfies exact axioms, for details, we refer the reader to \cite[Definition 2.1]{Buhler10}. Given an exact category $\A$, we will call a kernel-cokernel sequence {\it short exact} if it is in $\mathcal{E}$. The map $s$ in a short exact sequence $0\to A\stackrel{s}\to B\stackrel{t} \to C\to 0$ is called an {\it inflation} and the map $t$ is called a {\it deflation}.  An exact category $\A$ is called {\it weakly idempotent complete} if every split monomorphism (a map with left inverse) is an inflation or equivalently every split epimorphism (a map with right inverse) is a deflation.
\begin{definition} (\cite[Definition 2.1]{Hovey02}) \ An {\it exact model category} is a weakly idempotent complete additive category which has both an exact structure and a model category such that the cofibrations are the inflations with cofibrant cokernels and the fibrations are the deflations with fibrant kernels.
\end{definition}

\begin{remark} \
For an exact model category $\A$, the acyclic cofibrations are precisely the inflations with trivial cofibrant cokernels, the acyclic fibrations are precisely the deflations with trivial fibrant kernels; see \cite[Proposition 4.2]{Hovey02}.
\end{remark}

\begin{definition} \ \cite[Definition 2.1]{Gillespie11} Let $\A$ be an exact category. A {\it cotorsion pair} in $\A$ is a pair $(\mathcal{D}, \mathcal{E})$ of classes of objects of $\A$ such that
\vskip5pt
$(i)$ \ $\mathcal{D}=\{D\in \A \ | \ {\rm Ext}^1_\A(D, \mathcal{E})=0\}.$
\vskip5pt
$(ii)$ \ $\mathcal{E}=\{E\in \A \ | \ {\rm Ext}^1_\A(\mathcal{D}, E)=0\}.$
\end{definition}
The cotorsion pair $(\mathcal{D}, \mathcal{E})$ is called {\it complete} if the following conditions are satisfied:
\vskip5pt
$(iii)$ \ $(\mathcal{D}, \mathcal{E})$ has {\it enough projectives}, i.e. for each $A\in \A$ there exists a short exact sequence $0\to E\to D\to A\to 0$ such that $D\in \mathcal{D}$ and $E\in \mathcal{E}$.
\vskip5pt
$(iv)$ \ $(\mathcal{D}, \mathcal{E})$ has {\it enough injectives}, i.e. for each $A\in \A$ there exists a short exact sequence $0\to A\to E\to D\to 0$ such that $D\in \mathcal{D}$ and $E\in \mathcal{E}$.
\vskip5pt

For a convenient statement, we follow Gillespie \cite[Definition 3.1]{Gillespie12} to define the notion of a {\it Hovey triple}. Recall that a full subcategory $\mathcal{W}$ of an exact category $\A$ is called {\it thick} if it is closed under direct summands and if two out of three of the terms in a short exact sequence are in $\mathcal{W}$, then so is the third. A triple $(\mathcal{D}, \mathcal{W}, \mathcal{E})$ of classes of objects in an exact category $\A$ is called a {\it Hovey triple} if $\mathcal{W}$ is a thick subcategory of $\A$ and both $(\mathcal{D}, \mathcal{W}\cap \mathcal{E})$ and $(\mathcal{D}\cap \mathcal{W}, \mathcal{E})$ are complete cotorsion pairs in $\A$.

The fundamental result of exact model categories is the following Hovey's correspondence:
\begin{theorem} \ $($\cite[Theorem 2.2]{Hovey02}$)$ \ Let $\A$ be an exact category. There is a bijection:
 \[
\left\{\begin{gathered}\text{exact model structures  }\\ \text{on  $\A$}
\end{gathered}\;
\right\} \xymatrix@C=1pc{ \ar[r]^-{\sim} & }\left\{
\begin{gathered}
  \text{Hovey triples in $\A$}
\end{gathered}
\right\}\,
\]
which sends an exact model structure on $\A$ to $(\A_c, \A_{triv}, \A_f)$.
\end{theorem}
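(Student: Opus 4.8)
The plan is to construct the inverse of the stated assignment and to verify that both assignments are well defined and mutually inverse. The assignment from model structures to triples is already fixed: an exact model structure on $\A$ is sent to $(\A_c,\A_{triv},\A_f)$. For the reverse assignment, given a Hovey triple $(\D,\mathcal{W},\mathcal{E})$, I would declare a morphism to be a \emph{cofibration} if it is an inflation with cokernel in $\D$, a \emph{fibration} if it is a deflation with kernel in $\mathcal{E}$, and a \emph{weak equivalence} if it admits a factorization as an inflation with cokernel in $\D\cap\mathcal{W}$ followed by a deflation with kernel in $\mathcal{W}\cap\mathcal{E}$. The work then splits into three parts: (a) the forward assignment really lands among Hovey triples; (b) the data just defined really is an exact model structure; (c) the two round trips are the identity.

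The technical hinge for both directions is the following elementary bridge, valid in any weakly idempotent complete exact category: for classes of objects $\mathcal{U},\mathcal{V}$ one has ${\rm Ext}^1_\A(\mathcal{U},\mathcal{V})=0$ if and only if every inflation with cokernel in $\mathcal{U}$ has the left lifting property against every deflation with kernel in $\mathcal{V}$. Indeed ${\rm Ext}^1_\A(U,V)=0$ says precisely that every short exact sequence $0\to V\to X\to U\to 0$ splits, and a diagonal filler in the relevant lifting square is manufactured from such a splitting and conversely, weak idempotent completeness being what lets one recognize the auxiliary maps as genuine inflations and deflations. Granting this, the forward direction is almost formal: since in any model category the cofibrations are exactly the maps with the left lifting property against acyclic fibrations, an object $A$ is cofibrant if and only if $0\to A$ lifts against all acyclic fibrations, i.e.\ (using Remark 3.2 and the bridge) if and only if ${\rm Ext}^1_\A(A,\A_{triv}\cap\A_f)=0$; dually $A\in\A_{triv}\cap\A_f$ if and only if ${\rm Ext}^1_\A(\A_c,A)=0$. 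Thus $(\A_c,\A_{triv}\cap\A_f)$ is a genuine cotorsion pair, and likewise $(\A_c\cap\A_{triv},\A_f)$; completeness of each (enough projectives and injectives) is read off from the factorization axiom applied to the maps $0\to A$ and $A\to 0$. It remains to see $\mathcal{W}=\A_{triv}$ is thick: closure under summands follows from closure of weak equivalences under retracts, and the two-out-of-three rule for short exact sequences is extracted from the two-out-of-three axiom for weak equivalences after translating the relevant structure maps of a short exact sequence into weak equivalences.

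For the reverse direction the lifting and factorization axioms come cheaply: the two lifting axioms are the bridge applied to the two complete cotorsion pairs of the Hovey triple, and every morphism factors as a cofibration followed by an acyclic fibration (and as an acyclic cofibration followed by a fibration) by the standard pullback/pushout upgrade of ``enough projectives/injectives'' to a factorization of an arbitrary map. The genuinely delicate step, which I expect to be the main obstacle, is showing that the class of weak equivalences defined above is closed under retracts and satisfies two-out-of-three; this is exactly where the thickness of $\mathcal{W}$ is indispensable, and it is the mirror image of the thickness check in the forward direction. The cleanest route is first to prove a characterization of weak equivalences intrinsic to $\mathcal{W}$ --- namely that a morphism is a weak equivalence if and only if in some (equivalently any) factorization into a cofibration followed by a fibration the relevant kernel and cokernel lie in $\mathcal{W}$ --- and then to deduce two-out-of-three by chasing the short exact sequences attached to a composable pair and invoking thickness of $\mathcal{W}$; closure under retracts follows from the same characterization.

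Finally, for mutual inverseness I would check that starting from a Hovey triple $(\D,\mathcal{W},\mathcal{E})$ and forming the model structure above, the cofibrant objects are exactly $\D$ (as $0\to A$ is a cofibration iff $A\in\D$), the fibrant objects are exactly $\mathcal{E}$, and the trivial objects are exactly $\mathcal{W}$ (an object is trivial iff $0\to A$ admits the defining factorization, which forces $A\in\mathcal{W}$, the converse using thickness). The opposite round trip is the statement that an exact model structure is recovered from $(\A_c,\A_{triv},\A_f)$: the cofibrations and fibrations are recovered verbatim from Definition 3.1, the acyclic ones from Remark 3.2, and the weak equivalences from the factorization axiom together with the characterization above. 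Throughout, weak idempotent completeness is used to pass freely between split monomorphisms and epimorphisms on the one hand and inflations and deflations on the other, which is what keeps the cotorsion-pair and lifting-property descriptions in exact agreement.
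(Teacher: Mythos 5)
Your outline is correct and is essentially the canonical proof of this theorem: the paper itself offers no proof, quoting the result from \cite[Theorem 2.2]{Hovey02} (and, in this exact-category generality, \cite[Theorem 3.3]{Gillespie11}), and your construction of the inverse assignment, the bridge lemma identifying ${\rm Ext}^1_\A(\mathcal{U},\mathcal{V})=0$ with a lifting property in a weakly idempotent complete exact category, the recovery of completeness from the factorizations of $0\to A$ and $A\to 0$, and the round-trip checks all follow those references step for step. The one place you stop short of a full argument --- two-out-of-three and retract closure for the weak equivalences defined from a Hovey triple --- is indeed the genuinely delicate point of Hovey's proof, and the route you indicate (an intrinsic characterization of weak equivalences via cofibration--fibration factorizations plus thickness of $\mathcal{W}$) is precisely the mechanism used there, so your proposal is a faithful reconstruction of the cited proof rather than a different one.
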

Here the statement in this generality is stated in \cite[Theorem 3.3]{Gillespie11}.
\begin{remark}
If $\A$ is an abelian category which has enough projective and injective objects, the above correspondence is also discussed in \cite[Chapter VIII]{Beligiannis/Reiten07}.
\end{remark}
\subsection{Stable categories }\ Recall that an additive subcategory $\X$ of an additive category $\C$ is called {\it contravariantly finite} in $\C$ if each object $C$ of $\C$ has a {\it right $\X$-approximation}, i.e. there is a morphism $X_C\to C$ with $X_C\in \X$ such that the induced map $\Hom_\C(X, X_C)\to \Hom_\C(X, C)$ is surjective for all $X\in \X$. Dually,
 $\X$ is called {covariantly finite} in $\C$ if every object $C$ has a {\it left $\X$-approximation}, i.e. there is a morphism $C\to X^C$ with $X^C\in \X$ such that the induced map $\Hom_\C(C, X)\to \Hom_\C(X^C, X)$ is injective for all $X\in \X$.

 Given two morphisms $f,g: C\to D$ in $\C$, we say that $f$ is {\it stably equivalent } to $g$ with respect to $\X$, written $f\stackrel{\X}\sim g$, if $f-g$ factors through some object of $\X$. It is well known that stable equivalence is an equivalence relation which is compatible with compositions. That is, if $f\stackrel{\X}\sim g$, then $fk\stackrel{\X}\sim gk$ and $hf\stackrel{\X}\sim hg$ whenever the compositions make sense. Let $\C/\X$ be the stable category. That is the category whose objects are the same as $\C$ and whose morphisms are the stable equivalence classes of $\C$ with respect to $\X$. Let $\pi: \C\to \C/\X$ be the canonical quotient functor. The image $\pi(C)$ of $C\in \C$ is denoted by $\underline{C}$ and the image $\pi(f)$ of any morphism $f$ is denoted by $\underline{f}$.
 \vskip5pt
 For simplicity, we will suppress the underline on the objects in stable categories but preserve the underline on the morphisms. If a morphism has a subindex such as $f_g$ in $\C$, then we will write $\ul{f}_g$ instead of $\ul{f_g}$ in the stable category.

\subsection{Homotopy categories of model categories}
Given a model category $\A$, recall that a {\it path object} for an object $A\in \A$ is an object $A^I$ of $\A$ together with a factorization of the diagonal map $A\stackrel{(1_A, 1_A)}\To A\prod A $:
$A\stackrel{s}\to A^I\stackrel{(p_0,p_1)}\To A\prod A $
 where $s$ is a weak equivalence and $(p_0,p_1)$ a fibration such that $p_0s=p_1s=1_A$. A path object is called {\it very good} if in addition $s$ is a cofibration.

 Two morphisms $f,g:A\to B$ in $\A$ are called {\it right homotopic} if there exists a path object $B^I$ for $B$ and a morphism $H: A\to B^I$ such that $f=p_0H$ and $g=p_1H$. In this case, $H$ is called a {\it right homotopy} from $f$ to $g$. If $f$ and $g$ are right homotopic, we denote this by $f\stackrel{r}\sim g$. Homotopic morphisms become equal in the homotopy category. The relation $\stackrel{r}\sim$ is compatible with composition in $\A_c$; see \cite[Lemma I.1.6]{Quillen67} and \cite[Lemma 4.17]{Dwyer/Spalinski95}. We denote by $\A_c/{\stackrel{r}\sim}$ the quotient category of $\A_c$ with respective to the equivalence relation generated by $\stackrel{r}\sim$.

 Dually, one can define the notions of {\it cylinder objects}, {\it left homotopic} $\stackrel{l}\sim$, {\it left homotopies} and the quotient category $\A_f/{\stackrel{l}\sim}$, respectively.

 In the subcategory of bifibrant objects $\A_{cf}$, the relations $\stackrel{r}\sim$ and $\stackrel{l}\sim$ coincide and yield an equivalence relation. We denote this relation by $\stackrel{h}\sim$. By \cite[Theorem VII.4.2]{Beligiannis/Reiten07}, \cite[Proposition 4.3,4.7]{Gillespie11}, \cite[Proposition 1.1.14]{Becker12}, if $\A$ is an exact model category, the homotopy relation on $\A_{cf}$ is the same as the stably equivalence relation $\stackrel{\omega}\sim$ (see Subsection 3.3). So the corresponding quotient category $\A_{cf}/\stackrel{h}\sim=\A_{cf}/\omega$.

 The {homotopy category} of $\A$ is the Gabriel-Zisman localization \cite{Gabriel/Zisman67} of $\A$ with respective to the class of weak equivalences and is denoted by $\gamma: \A\to \Ho(\A)$. We will use $\gamma_c:\A_c\to \Ho(\A_c)$ (resp. $\gamma_f:\A_f\to \Ho(\A_f)$) to denote the localization of $\A_c$ (resp. $\A_f$) with respect to the class of maps in $\A_c$  (resp. $\A_f$) which are weak equivalences in $\A$.

 D. Quillen's homotopy category theorem \cite[Theorem I.1.1]{Quillen67} gives the following commutative diagram:

\[
\xymatrix{
\A_f/{\stackrel{l}\sim} \ar[d]_{\bar{\gamma}_f}& \A_{cf}/\omega \ar@{^{(}->}[r]\ar@{_{(}->}[l]\ar@{_{(}->}[d]_\sim^{\bar{\gamma}} & \A_c/{\stackrel{r}\sim} \ar[d]_{\bar{\gamma}_c}\\
\Ho(\A_f) \ar@{^{(}->}[r]^\sim & \Ho(\A) & \Ho(\A_c)\ar@{_{(}->}[l]_\sim}
\]
where $\hookrightarrow$ denotes a full embedding and $\stackrel{\sim}\to$ denotes an equivalence of categories.

D. Quillen also has shown that there was extra structure on $\Ho(\A)$ such as the suspension and loop functors and the families of fibration and cofibration sequences. The homotopy category category together with this structure is called the {\it homotopy theory } of the model category $\A$; see \cite{Quillen69}.
When $\A$ is in addition an additive category, it just means that with this structure $\Ho(\A)$ is a pretriangulated category in the sense of Definition 2.6.
We refer the reader to \cite[Chapter I]{Quillen67} for explicit construction of the homotopy theory of a model category.
\section{Priles of one-sided triangulated categories arising from exact model categories}

In this section, we fix an exact model category $\A$.
\subsection{} By Hovey's correspondence, there are two complete cotorsion pairs $(\A_c, \A_{triv}\cap \A_f)$ and $(\A_c\cap \A_{triv}, \A_f)$. Let $\omega=\A_{cf}\cap \A_{triv}$. Then $\omega$ is contravariantly finite in $\A_f$ and covariantly finite in $\A_c$. So for each object $A$ in $\A_f$, we can assign a short exact sequence $0\to \ker p_{_A}\stackrel{\iota_A}\to W_A\stackrel{p_{_A}}\to A\to 0$ in $\A_f$ such that $p_{_A}$ is a right $\omega$-approximation. Given a morphism $f: B\to A$ in $\A_f$, there is a commutative diagram with exact rows:
\[
\xymatrix{
0\ar[r] &\ker p_{_B}\ar[r]^{\iota_{_B}}\ar[d]_{\kappa_f} &W_B\ar[d]_{x_f} \ar[r]^{p_{_B}} & B\ar[d]_f \ar[r] & 0 \\
0 \ar[r] & \ker p_{_A}\ar[r]^{\iota_{_A}} & W_A \ar[r]^{p_{_A}} & A\ar[r]& 0}
\]
Let $\Omega^f: \A_f/\omega\to \A_f/\omega$ be the functor which sends each object $A\in \A_f$ to $\ker p_{_A}$ and each morphism $\underline{f}$ to $\ul{\kappa}_f$ ( this functor is well-defined, for details; see \cite[Section 1]{Beligiannis/Marmaridis94} or \cite[Subsection 2.2]{ZWLi}). Let $\triangle_f$ be the class of left triangles which are isomorphic to those of the form $\Omega^f(A)\stackrel{\ul{\zeta}_f}\to {\rm PB}(f)\stackrel{\ul{\mu}_f}\to B\stackrel{\ul{f}}\to A $ in the stable category $\A_f/\omega$. Where the left triangle fits into the following commutative diagram in $\A_f$
\[
\xymatrix{
0  \ar[r] & \ker p_{_X} \ar[r]^{\zeta_{_f} \ \ }\ar@{=}[d] & {\rm PB}(f)\ar[d]_{\theta_{_f}} \ar[r]^{~~~\ \mu_{_f}} & B \ar[d]_f ^{\ \ \ \ \ \ \ \ \ \qquad(*)}\ar[r]& 0\\
0  \ar[r] & \ker p_{_X} \ar[r]^{\iota_A} & W_A \ar[r]^{p_{_{A}}} & A \ar[r] & 0}
\]
with exact rows and the right square being a pullback.
\vskip5pt
Dually, one can construct an endofunctor $\Sigma^c$ of $\A_c/\omega$ and a class of right triangles $\bigtriangledown_c$ in $\A_c/\omega$.
\vskip5pt
Similar to Theorem 4.6 of \cite{ZWLi} in the case of abelian model categories, we have the following lemma:
\begin{lemma}  \ Let $\A$ be an exact model category. With the above notations, we have
\vskip5pt
$(i)$ \ $(\A_f/\omega, \Omega^f, \triangle_f)$ is a left triangulated category.
\vskip5pt
$(ii)$ \ $(\A_c/\omega, \Sigma^c, \bigtriangledown_c)$ is a right triangulated category.
\end{lemma}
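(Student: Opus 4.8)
The construction of $\Omega^f$ and $\triangle_f$ is the relative syzygy construction attached to the contravariantly finite subcategory $\omega$ of $\A_f$, so the plan is to verify the four axioms of Definition 2.1, following the pattern of \cite[Theorem 4.6]{ZWLi} but working throughout in the exact category $\A_f$; part $(ii)$ is then formally dual. First I would record the structural inputs. Being the right-hand class of the cotorsion pair $(\A_c\cap\A_{triv},\A_f)$, the subcategory $\A_f$ is closed under extensions in $\A$ and hence inherits an exact structure in which the assigned sequences $0\to\Omega^f(A)\to W_A\to A\to 0$ are short exact. That $\Omega^f$ is a well-defined additive endofunctor of $\A_f/\omega$, and that $\triangle_f$ is closed under isomorphism (the latter by its definition), I take from \cite{Beligiannis/Marmaridis94, ZWLi}. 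The point I would stress is that the pullback $\mathrm{PB}(f)=B\times_A W_A$ of diagram $(*)$ exists and stays in $\A_f$: pulling the deflation $p_{_A}$ back along $f$ yields the short exact sequence $0\to\Omega^f(A)\to\mathrm{PB}(f)\xrightarrow{\mu_f}B\to 0$, and closure of $\A_f$ under extensions forces $\mathrm{PB}(f)\in\A_f$ (see \cite{Buhler10} for the exact-category facts used here).

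For ${\rm (LT1)}$, the left triangle attached to any $f\colon B\to A$ in diagram $(*)$ lies in $\triangle_f$ by definition; applying the construction to the morphism $A\to 0$ with $W_0=0$ gives $\mathrm{PB}(A\to 0)=A$ and $\underline{\mu}=\underline{1_A}$, hence the degenerate triangle $0\to A\xrightarrow{\underline{1_A}}A\to 0$ in $\triangle_f$. For ${\rm (LT3)}$ I would use the definition of $\triangle_f$ to reduce to two standard triangles attached to $f$ and $f'$; after lifting the stably commutative right square to $\A_f$ and choosing a lift $x_\alpha\colon W_A\to W_{A'}$ of $\alpha$ (which exists because $p_{_{A'}}$ is a right $\omega$-approximation and $W_A\in\omega$), the universal property of $\mathrm{PB}(f')$ supplies the middle filler $\mathrm{PB}(f)\to\mathrm{PB}(f')$, any failure of strict commutativity being corrected by a morphism through $\omega$ and hence vanishing in $\A_f/\omega$.

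The two substantive axioms are rotation and the octahedron. For ${\rm (LT2)}$ I would compare the rotated sequence $\Omega^f(B)\xrightarrow{-\Omega^f(\underline{f})}\Omega^f(A)\xrightarrow{\underline{\zeta}_f}\mathrm{PB}(f)\xrightarrow{\underline{\mu}_f}B$ with the standard triangle of the deflation $\mu_f\colon\mathrm{PB}(f)\to B$: pasting the two pullback squares defining $\mathrm{PB}(f)$ and $\mathrm{PB}(\mu_f)$, together with the lift $x_f\colon W_B\to W_A$ that computes $\Omega^f(\underline{f})$, identifies $\mathrm{PB}(\mu_f)$ with $\Omega^f(A)$ in $\A_f/\omega$ and pins the connecting morphism to $-\Omega^f(\underline{f})$, the sign coming from the usual comparison with the syzygy sequence of $B$. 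For ${\rm (LT4)}$, given composable $C\xrightarrow{g}B\xrightarrow{f}A$, I would set $D=\mathrm{PB}(g)$, $E=\mathrm{PB}(fg)$ and $F=\mathrm{PB}(f)$; the pasting law for pullbacks identifies $E$ with $C\times_B F$, and the morphism $W_B\to F$ induced by $x_f$ produces the maps $D\to E\to F$ of the second column, which the $3\times 3$-lemma in $\A_f$ exhibits as a distinguished left triangle, the remaining commutativities being forced by the universal properties of the pullbacks.

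The main obstacle I anticipate is precisely the passage from the abelian setting of \cite{ZWLi} to the merely exact category $\A_f$: every step that in the abelian case proceeds by chasing elements of kernels and cokernels must here be recast through the exact-category formalism (stability of pullbacks along deflations, pushouts along inflations, and the $3\times 3$-lemma), and the real labour lies in the sign and compatibility bookkeeping of ${\rm (LT2)}$ and ${\rm (LT4)}$. These become routine once one uses that any two choices in the construction differ by a morphism factoring through $\omega$, so that all diagrams commute after stabilization. Part $(ii)$ follows by dualizing: $\A_c$ is the left-hand class of the cotorsion pair $(\A_c,\A_{triv}\cap\A_f)$, hence closed under extensions, $\omega$ is covariantly finite in $\A_c$, and one replaces right $\omega$-approximations, pullbacks and the syzygy $\Omega^f$ by left $\omega$-approximations, pushouts and the cosyzygy $\Sigma^c$.
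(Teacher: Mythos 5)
Your proposal is correct and takes essentially the same route the paper intends: the paper offers no written proof of this lemma, deferring entirely to \cite[Theorem 4.6]{ZWLi} and \cite{Beligiannis/Marmaridis94}, and your sketch is precisely that adaptation, with the right supporting observations (closure of $\A_f$ under extensions so that it carries an exact structure, existence and stability of the pullbacks ${\rm PB}(f)$ along deflations, correction of choices by morphisms factoring through $\omega$) needed to pass from the abelian to the exact setting. No further comment is needed.
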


\subsection{Priles of one-sided triangulated categories arising from exact model categories}

 Since the subcategory of bifibrant objects $\A_{cf}$ is contravariantly finite in $\A_f$ and the cotorsion pair $(\A_{c}, \A_{tri}\cap \A_f)$ is complete,  we can assign each object $A\in\A_f$ a short exact sequence $$0\to \ker q_{_A}\to Q(A)\stackrel{q_{_A}}\to A\to 0$$
 such that $q_{_A}$ is a right $\A_{cf}$-approximation and $\ker q_{_A}\in \A_{tri}\cap \A_f$. In particular, $q_{_A}$ is a weak equivalence. If $A\in \A_{cf}$ we just let $Q(A)=A$.
 Given a morphism $f: B\to A$ in $\A_f$, there is a commutative diagram with exact rows:
\[
\xymatrix{
0\ar[r] & \ker q_B\ar[r]^{\epsilon_{_B}}\ar[d] &Q(B)\ar[d]_{y_f} \ar[r]^{q_{_B}} & B\ar[d]_f \ar[r]& 0 \\
0 \ar[r] & \ker q_A\ar[r]^{\epsilon_{_A}} & Q(A) \ar[r]^{q_{_A}} & A \ar[r]& 0.}
\]

 Dually, $\A_{cf}$ is covariantly finite in $\A_c$ and we can assign each object $X\in \A_c$ a short exact sequence $$0\to X \stackrel{r^X}\to R(X)\to \cok r^X\to 0$$
 such that $r^X$ is a left $\A_{cf}$-approximation and $\cok r^X\in \A_{tri}\cap \A_c$ (so $r^X$ is a weak equivalence). If $X\in \A_{cf}$ we just let $R(X)=X$. Given a morphism $g: X\to Y$ in $\A_c$, there is a commutative diagram with exact rows:
\[
\xymatrix{
0\ar[r] & X\ar[r]^{r^X}\ar[d]_{g} &R(X)\ar[d]_{a^g} \ar[r] & \cok r^X\ar[d] \ar[r]& 0 \\
0 \ar[r] & Y\ar[r]^{r^Y} & R(Y) \ar[r] & \cok r^Y \ar[r]& 0.}
\]
Combined with the notations ${\rm PB}(f)$ and ${\rm PO}(g)$ as defined in $(*)$ and its dual, we have:
\begin{lemma} \
$(i)$ For any morphism $f:B\to A$ in $\A_f$, the induced morphism ${\rm PB}(y_{_f})\to {\rm PB}(f)$ is a weak equivalence.
\vskip5pt
$(ii)$ For any morphism $g:X\to Y$ in $\A_c$, the induced morphism ${\rm PO}(g)\to {\rm PO}(a^g)$ is a weak equivalence.
\end{lemma}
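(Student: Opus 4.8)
The plan is to realise the comparison map as a map between two pullbacks formed along fibrations, induced by a levelwise weak equivalence, and then to invoke right properness of $\A$; part $(ii)$ will be the formal dual, with pushouts and left properness in place of pullbacks and right properness.

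First I would make the comparison explicit. By construction ${\rm PB}(f)=B\times_A W_A$ is the pullback of the deflation $p_A\colon W_A\to A$ along $f$, and ${\rm PB}(y_f)=Q(B)\times_{Q(A)}W_{Q(A)}$ is the pullback of $p_{Q(A)}\colon W_{Q(A)}\to Q(A)$ along $y_f$. The approximations $q_B\colon Q(B)\to B$, $q_A\colon Q(A)\to A$ and the map $x_{q_A}\colon W_{Q(A)}\to W_A$ induced by $q_A$ (as in the diagram defining $\Omega^f$) assemble into a morphism of cospans
\[
\bigl(Q(B)\xrightarrow{\,y_f\,}Q(A)\xleftarrow{\,p_{Q(A)}\,}W_{Q(A)}\bigr)\To\bigl(B\xrightarrow{\,f\,}A\xleftarrow{\,p_A\,}W_A\bigr),
\]
whose induced map on pullbacks is, by the universal property, exactly the morphism ${\rm PB}(y_f)\to{\rm PB}(f)$ of the statement.

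Next I would verify the hypotheses of the gluing lemma. The three vertical maps are weak equivalences: $q_B$ and $q_A$ are acyclic fibrations by their defining short exact sequences (deflations with kernels in $\A_{triv}\cap\A_f$), while $x_{q_A}$ is a weak equivalence because $W_{Q(A)},W_A\in\omega\subseteq\A_{triv}$ both become zero objects in $\Ho(\A)$, so $\gamma(x_{q_A})$ is the unique isomorphism of zero objects and $x_{q_A}$ is inverted by $\gamma$, hence a weak equivalence by saturation. Moreover $p_A$ and $p_{Q(A)}$ are fibrations, being deflations with fibrant kernels $\ker p_A=\Omega^f(A)$ and $\ker p_{Q(A)}$. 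Both squares are therefore pullbacks along fibrations, so they are homotopy pullbacks, and the gluing lemma for homotopy pullbacks yields that ${\rm PB}(y_f)\to{\rm PB}(f)$ is a weak equivalence.

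The ingredient that must be secured is that an exact model category is right proper, so that a pullback along a fibration is a homotopy pullback. I would deduce this from the exact structure: factoring a weak equivalence as an acyclic cofibration $a$ followed by an acyclic fibration, one uses that acyclic fibrations are stable under base change, while the pullback of $a$ --- an inflation with trivial cofibrant cokernel $C$ --- along any deflation is again an inflation with cokernel $C\in\A_{triv}\cap\A_c$, hence an acyclic cofibration; two-out-of-three then gives right properness. In practice I would avoid the abstract gluing lemma and instead interpolate through the auxiliary pullback $Q(A)\times_A W_A$ of $p_A$ along $q_A$: base change of $q_B$ along the fibration $\mu_f\colon{\rm PB}(f)\to B$ handles one half by right properness, and the pullback along $y_f$ of the induced fibrewise weak equivalence $W_{Q(A)}\to Q(A)\times_A W_A$ handles the other, after factoring the latter over $Q(A)$ and using stability of acyclic (co)fibrations under base change. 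This second stage --- where the fact that $W_{Q(A)}$ and $W_A$ are independently chosen $\omega$-approximations of the weakly equivalent objects $Q(A)$ and $A$ has to be reconciled --- is the step I expect to demand the most care. Part $(ii)$ then follows by dualising the whole argument verbatim.
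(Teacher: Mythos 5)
Your argument is correct and is essentially the paper's own: the author likewise realises ${\rm PB}(y_{_f})\to {\rm PB}(f)$ as the map induced on pullbacks along fibrations by a levelwise weak equivalence of cospans (phrased there via the path-object fibrations $A\oplus W_A\to A\oplus A$ and $Q(A)\oplus W_{Q(A)}\to Q(A)\oplus Q(A)$ rather than your cospan $B\to A\leftarrow W_A$) and then invokes the Gluing Lemma of Radulescu-Banu, checking that the column involving $x_{q_{_A}}$ is a weak equivalence by two-out-of-three. Your additional derivation of right properness from the exact structure is sound but is subsumed by citing that lemma directly, and part $(ii)$ is dual in both treatments.
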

\begin{proof} \ We only prove assertion $(ii)$ since $(ii)$ is the dual case. Applying the Gluing Lemma \cite[Lemma 1.4.1 (2)]{Radulescu-Banu06} to the commutative diagram
\[
\xymatrix{
 Q(A)\oplus W_{Q(A)}\ar[r]^{\left(\begin{smallmatrix}
1 & p_{_{Q(A)}} \\
1 & 0
\end{smallmatrix}\right)}\ar[d]_{\left(\begin{smallmatrix}
q_{_A} & 0 \\
0 & x_{q_{_A}}
\end{smallmatrix}\right)} &Q(A)\oplus Q(A)\ar[d]_{\left(\begin{smallmatrix}
q_{_A} & 0 \\
0 & q_{_A}
\end{smallmatrix}\right)} & Q(B) \ar[l]_{\ \ \ \ \ \ \left(\begin{smallmatrix}
y_{_f} \\
0
\end{smallmatrix}\right)} \ar[d]^{q_{_B}}  \\
 A\oplus W_A\ar[r]_{\left(\begin{smallmatrix}
1 & p_{_A} \\
1 & 0
\end{smallmatrix}\right)} & A\oplus A & B\ar[l]_{\left(\begin{smallmatrix}
f \\
0
\end{smallmatrix}\right)}}
\]
we know that the induced morphism ${\rm PB}(y_{_f}) \to {\rm PB}(f)$ is a weak equivalence. Where the first column is a weak equivalence is by the 2-out-of-3 axiom of weak equivalences since we have an equality:
$$\left(\begin{smallmatrix}
q_{_A} & 0 \\
0 & x_{q_{_A}}
\end{smallmatrix}\right)\left(\begin{smallmatrix}
1 \\
0
\end{smallmatrix}\right)=\left(\begin{smallmatrix}
1  \\
0
\end{smallmatrix}\right)q_{_A}:Q(A)\to A\oplus W_A.$$
Note that the morphisms $\left(\begin{smallmatrix}
1  \\
0
\end{smallmatrix}\right):Q(A)\to Q(A)\oplus W_{Q(A)}, \left(\begin{smallmatrix}
1  \\
0
\end{smallmatrix}\right):A\to A\oplus W_A, q_{_A}$ are weak equivalences by the construction of exact model structure of $\A$.
\end{proof}

We will use $i:\A_{cf}/\omega\hookrightarrow \A_f/\omega$ and $j:\A_{cf}/\omega\hookrightarrow \A_c/\omega$ to denote the inclusion functors respectively.

 \begin{lemma} \
$(i)$ \ $\A_{cf}/\omega$ is a coreflective subcategory of $\A_f/\omega$.
\vskip5pt
$(ii)$ \ $\A_{cf}/\omega$ is a reflective subcategory of $\A_c/\omega$.
\end{lemma}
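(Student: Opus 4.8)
The plan is to exhibit the coreflector and reflector explicitly as the functors $Q$ and $R$ constructed just above the statement, and to invoke the pointwise criterion for the existence of an adjoint (see \cite{MacLane98}). Since $i\colon \A_{cf}/\omega\hookrightarrow \A_f/\omega$ is the inclusion of a full subcategory with $\omega\subseteq \A_{cf}$, it is fully faithful, so to prove $(i)$ it suffices to show that for every $A\in \A_f$ the morphism $\varepsilon_A:=\ul{q}_{_A}\colon Q(A)\to A$ of $\A_f/\omega$ is a terminal arrow from $i$ to $A$; equivalently, that for every $W\in \A_{cf}$ the map
\[
\Hom_{\A_{cf}/\omega}(W,Q(A))\To \Hom_{\A_f/\omega}(W,A),\qquad \ul{h}\mapsto \ul{q}_{_A}\,\ul{h},
\]
is a bijection. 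Once these universal arrows exist, the right adjoint is automatic and agrees on objects with $Q$. Part $(ii)$ is entirely dual, with unit $\eta_X:=\ul{r}^{X}\colon X\to R(X)$, so I would only write out $(i)$ in full.

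Surjectivity is immediate. A morphism $\ul g\colon W\to A$ in $\A_f/\omega$ is represented by some $g\colon W\to A$ in $\A$ with $W\in \A_{cf}$, and since $q_{_A}$ is a right $\A_{cf}$-approximation the induced map $\Hom_\A(W,Q(A))\to \Hom_\A(W,A)$ is onto; lifting $g$ to $h$ with $q_{_A}h=g$ gives $\ul{q}_{_A}\,\ul{h}=\ul g$.

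Injectivity (that is, faithfulness of the coreflection) is where the work lies. Suppose $d\colon W\to Q(A)$ satisfies $q_{_A}d\stackrel{\omega}\sim 0$; I must show $d\stackrel{\omega}\sim 0$. Writing $q_{_A}d=\beta\alpha$ with $\alpha\colon W\to V$, $\beta\colon V\to A$ and $V\in\omega$, the approximation property lifts $\beta$ to $\tilde\beta\colon V\to Q(A)$ with $q_{_A}\tilde\beta=\beta$; then $q_{_A}(d-\tilde\beta\alpha)=0$, so $d-\tilde\beta\alpha=\epsilon_{_A}\sigma$ for some $\sigma\colon W\to \ker q_{_A}$, using the defining sequence $0\to \ker q_{_A}\stackrel{\epsilon_{_A}}\to Q(A)\stackrel{q_{_A}}\to A\to 0$. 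As $\tilde\beta\alpha$ already factors through $\omega$, it remains to show that $\epsilon_{_A}\sigma$ factors through $\omega$.

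The hard part is precisely this last reduction, and the key is to approximate $\ker q_{_A}$ by an object that lies in $\omega$ rather than merely in $\A_{triv}\cap\A_f$. Applying the enough-projectives of the cotorsion pair $(\A_c\cap \A_{triv},\A_f)$ to $\ker q_{_A}\in \A_{triv}\cap \A_f$ yields a short exact sequence $0\to E\to D\stackrel{p}\to \ker q_{_A}\to 0$ with $D\in \A_c\cap \A_{triv}$ and $E\in \A_f$. Because $\A_f$ is the right class of that cotorsion pair it is closed under extensions, so from $\ker q_{_A},E\in \A_f$ I get $D\in \A_f$, whence $D\in \A_c\cap\A_{triv}\cap\A_f=\omega$; and thickness of $\A_{triv}$ gives $E\in \A_{triv}\cap\A_f$. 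Now $W\in\A_c$ and $E\in\A_{triv}\cap\A_f$ force ${\rm Ext}^1_\A(W,E)=0$ via the pair $(\A_c,\A_{triv}\cap\A_f)$, so $\sigma$ lifts along $p$ to some $\tilde\sigma\colon W\to D$ and $\epsilon_{_A}\sigma=\epsilon_{_A}\,p\,\tilde\sigma$ factors through $D\in\omega$, as required. I expect this to be the only genuinely delicate point; the remaining verifications are formal. The dual argument for $(ii)$ uses symmetrically that $\A_c$ is closed under extensions, together with the enough-injectives of $(\A_c,\A_{triv}\cap\A_f)$ applied to $\cok r^{X}\in\A_{triv}\cap\A_c$.
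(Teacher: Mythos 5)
Your proof is correct, but it reaches the key bijection by a genuinely different mechanism than the paper. Both arguments reduce the lemma to showing that, for $W\in\A_{cf}$ and $A\in\A_f$, composition with $\ul{q}_{_A}$ gives a bijection $\Hom_{\A_{cf}/\omega}(W,Q(A))\to\Hom_{\A_f/\omega}(W,A)$ (and dually for $r^X$). The paper gets this, together with the well-definedness of $Q$ on stable classes, by citing Proposition 4.4 of Gillespie and Lemma I.1.7 of Quillen --- that is, by identifying stable equivalence modulo $\omega$ with left/right homotopy and then invoking the model-categorical fact that a weak equivalence between fibrant objects induces a bijection on homotopy classes of maps out of a cofibrant object. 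You instead prove injectivity directly from the Hovey triple: after stripping off the part of $q_{_A}d$ that factors through $\omega$, you factor the remainder through $\ker q_{_A}\in\A_{triv}\cap\A_f$, replace $\ker q_{_A}$ by a deflation $D\to\ker q_{_A}$ with $D\in\omega$ (using completeness of $(\A_c\cap\A_{triv},\A_f)$, closure of $\A_f$ under extensions, and thickness of $\A_{triv}$), and lift $\sigma$ along it via ${\rm Ext}^1_\A(W,E)=0$ from the cotorsion pair $(\A_c,\A_{triv}\cap\A_f)$. Each of these steps is sound (the lift of $\sigma$ is the splitting of the pulled-back conflation), and the dual bookkeeping for $(ii)$ is also right. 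Your route buys a self-contained, purely exact-categorical argument that never mentions homotopy; the paper's route buys brevity and reuses the identification of $\stackrel{h}\sim$ with $\stackrel{\omega}\sim$ that it needs elsewhere anyway. One small point worth making explicit if you write this up: the later results use the concrete coreflector $\ul{f}\mapsto\ul{y}_f$, and your universal arrows do produce exactly this functor by uniqueness, since $q_{_A}y_f=fq_{_B}$.
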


\begin{proof} \ For $(i)$, by Proposition 4.4 of \cite{Gillespie11} and Lemma I.1.7 of \cite{Quillen67}, we know that if $\ul{f}=\ul{g}$ in $\A_f/\omega$ then $\ul{y}_f=\ul{y}_g$ in $\A_{cf}/\omega$. So we can define a functor $Q:\A_f/\omega \to \A_{cf}/\omega$ by sending $A$ to $Q(A)$ and $\ul{f}:B\to A$ to $\ul{y}_f$. By $(5)$ of Proposition 4.4 of \cite{Gillespie11} and Lemma I.1.7 of \cite{Quillen67} again, for $B\in \A_{cf}$ and $A\in \A_f$, there is a natural isomorphism:
$$q_{_{A,*}}=\Hom_{\A_f/\omega}(B, q_{_A}):\Hom_{\A_{cf}/\omega}(B,Q(A))\stackrel{\cong}\to \Hom_{\A_f/\omega}(i(B), A)$$
induced by the right $\A_{cf}$-approximation $q_{_A}:Q(A)\to A$. Thus $Q$ is a right adjoint of the inclusion functor $i:A_{cf}/\omega\to \A_{f}/\omega$.

For $(ii)$, dually, we can define a functor $R:\A_c/\omega\to \A_{cf}/\omega$ by sending $X$ to $R(X)$ and $\ul{f}:X\to Y$ to $\ul{a}^f$. The functor $R$ is a left adjoint of the inclusion functor $j:\A_{cf}/\omega\to \A_c/\omega$ and the adjunction isomorphism is induced by the left $\A_{cf}$-approximation $r^X:X\to R(X)$ for any $X\in \A_c$.
\end{proof}
\begin{remark} $(1)$\ Note that for $A\in \A_{cf}$ we assume that $Q(A)=A$, so the unit of the adjunction $(i,Q)$ is the identity. Dually, the counit of the adjunction in $(R,j)$ is the identity.

$(2)$\ For simplicity, we will omit the notations of $i$ and $j$ on objects.
\end{remark}
\begin{lemma} \ $(R\Sigma^ci, Q\Omega^fj )$ is an adjoint pair on $\A_{cf}/\omega$.
\end{lemma}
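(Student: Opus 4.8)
The plan is to assemble the required isomorphism from the two adjunctions of Lemma 4.3 together with a dimension-shifting isomorphism that compares the suspension in $\A_c/\omega$ with the loop in $\A_f/\omega$. Fix $X,Y\in\A_{cf}$; since on objects $i$ and $j$ are the inclusions $\A_{cf}\subseteq\A_f$ and $\A_{cf}\subseteq\A_c$ and may be suppressed (Remark 4.4), we read $R\Sigma^c i(X)$ as $R\Sigma^c(X)$ and $Q\Omega^f j(Y)$ as $Q\Omega^f(Y)$. The adjunction $R\dashv j$ of Lemma 4.3$(ii)$ gives a natural isomorphism $\Hom_{\A_{cf}/\omega}(R\Sigma^c(X),Y)\cong\Hom_{\A_c/\omega}(\Sigma^c(X),Y)$, and the adjunction $i\dashv Q$ of Lemma 4.3$(i)$ gives $\Hom_{\A_{cf}/\omega}(X,Q\Omega^f(Y))\cong\Hom_{\A_f/\omega}(X,\Omega^f(Y))$. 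Hence it suffices to construct a natural isomorphism $\Hom_{\A_c/\omega}(\Sigma^c(X),Y)\cong\Hom_{\A_f/\omega}(X,\Omega^f(Y))$ for $X,Y\in\A_{cf}$, after which the statement follows by composing the three isomorphisms.

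To build the middle isomorphism I would use the defining short exact sequences $0\to X\to W^X\to\Sigma^c(X)\to 0$ and $0\to\Omega^f(Y)\to W_Y\stackrel{p_Y}{\to}Y\to 0$ with $W^X,W_Y\in\omega$, where $p_Y$ is a right $\omega$-approximation. Given a representative $\phi\colon\Sigma^c(X)\to Y$, the composite $W^X\to\Sigma^c(X)\stackrel{\phi}{\to}Y$ is a morphism out of the $\omega$-object $W^X$, so the right $\omega$-approximation property of $p_Y$ lifts it to some $\psi\colon W^X\to W_Y$; passing to kernels produces $\chi\colon X\to\Omega^f(Y)$, so that $(\chi,\psi,\phi)$ is a morphism between the two sequences. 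The assignment $\ul\phi\mapsto\ul\chi$ is the candidate map, and reversing the construction, extending a given $\chi$ to a $\psi$ and inducing $\phi$ on cokernels, supplies the candidate inverse.

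The crux, and the step I expect to be the main obstacle, is to verify that $\ul\phi\mapsto\ul\chi$ is well defined on stable morphisms and bijective; this is exactly where the two Hovey cotorsion pairs interact. Independence of the lift $\psi$ is automatic, since two lifts differ by a morphism $W^X\to\Omega^f(Y)$ and the induced maps on kernels then differ by a morphism factoring through $W^X\in\omega$. The remaining points, namely that an $\omega$-factoring $\phi$ forces an $\omega$-factoring $\chi$ and conversely, and that the reverse construction is always available, rest on the vanishings ${\rm Ext}^1_\A(\Sigma^c(X),W)=0$ for $W\in\omega$ (from the cotorsion pair $(\A_c,\A_{triv}\cap\A_f)$, since $\Sigma^c(X)\in\A_c$ by Lemma 4.1) and ${\rm Ext}^1_\A(W,\Omega^f(Y))=0$ for $W\in\omega$ (from $(\A_c\cap\A_{triv},\A_f)$, since $\Omega^f(Y)\in\A_f$). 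These two vanishings solve every extension problem needed for well-definedness, injectivity and surjectivity, so $\ul\phi\mapsto\ul\chi$ descends to an isomorphism of stable Hom-groups. Naturality in $X$ and $Y$ then follows from the functoriality of $\Omega^f$ and $\Sigma^c$ (Lemma 4.1) and the naturality of the defining sequences, which render the chain-map construction compatible with morphisms; composing with the two natural isomorphisms of Lemma 4.3 yields the adjunction $(R\Sigma^c i,Q\Omega^f j)$.
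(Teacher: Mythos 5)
Your proposal follows essentially the same route as the paper: reduce via the two adjunctions of Lemma 4.3 to a natural isomorphism $\Hom_{\A_c/\omega}(\Sigma^c(X),Y)\cong\Hom_{\A_f/\omega}(X,\Omega^f(Y))$, and construct that isomorphism by lifting through the right $\omega$-approximation $p_{_Y}$ in the defining short exact sequences and passing to kernels (the paper's $\ul\alpha\mapsto -\ul\lambda_\alpha$ with inverse $\ul\beta\mapsto-\ul\lambda^\beta$, composed with $r^{\Sigma^c(X),*}$ and $q_{_{\Omega^f(Y),*}}^{-1}$). The only cosmetic difference is the sign the paper inserts for later compatibility with the triangulated structures, which does not affect the existence of the adjunction.
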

\begin{proof} \ Firstly, we prove that there is a natural isomorphism
$$\Hom_{\A_c/\omega}(\Sigma^c(A), B)\cong \Hom_{\A_f/\omega}(A, \Omega^f(B))$$
on $A, B\in \A_{cf}$. For each $\ul{\alpha}\in \Hom_{\A_{c}/\omega}(\Sigma^c(A),B)$, by the constructions of $\Sigma^c(A)$ and $\Omega^f(B)$, there is a commutative diagram
\[
\xymatrix{
0\ar[r] & A\ar[r]^{\tau^{A}}\ar[d]_{\lambda_\alpha} & W^{A}\ar[d] \ar[r]^{\pi^{A}} & \Sigma^cA\ar[d]_\alpha \ar[r]& 0   \\
0 \ar[r] & \Omega^fB\ar[r]^{\iota_{_{B}}} & W_{B} \ar[r]^{p_{_{B}}} & B \ar[r] &0. }
\]
The middle morphism exists since $p_{_{B}}$ is a right $\omega$-approximation and $W^{A}\in \omega$. It can be verified directly that $\ul{\lambda}_\alpha$ is uniquely decided by the stable equivalence classes of $\alpha$ in $\A_f/\omega$. Dually, each $\ul{\beta}\in \Hom_{\A_f/\omega}(A,\Omega^f(B))$, determines uniquely a morphism $\ul{\lambda}^\beta: \Sigma^c(A)\to B $ in $\A_c/\omega$
\[
\xymatrix{
0\ar[r] & A\ar[r]\ar[d]_{\beta} & W^A\ar[d]_{z^{\beta}} \ar[r]^{\pi^{A}} & \Sigma^c(A)\ar[d]_{\lambda^\beta} \ar[r]& 0   \\
0 \ar[r] & \Omega^f(B)\ar[r] & W_B \ar[r] & B \ar[r] &0. }
\]
So we can define a map
$$\varphi_{A,B}: \Hom_{\A_c/\omega}(\Sigma^c(A), B)\to \Hom_{\A_f/\omega}(A, \Omega^f(B))$$
 by sending $\ul{\alpha}$ to $-\ul{\lambda}_{\alpha}$, it has an inverse which sends $\ul{\beta}$ to $-\ul{\lambda}^{\beta}$. The naturalness of $\varphi$ can be verified directly.

Denote by $r^{\Sigma^c A,*}=\Hom_{\A_c/\omega}(r^{\Sigma^c (A)}, B)$ and $q_{_{\Omega^fB,*}}=\Hom_{\A_f/\omega}(A, q_{_{\Omega^fB}})$. Then the composition $$q_{\Omega^f(B),*}^{-1}  \varphi_{A,B}  r^{\Sigma^c (A), *}: \Hom_{\A_{cf}/\omega}(R\Sigma^c(A), B)\to \Hom_{\A_{cf}/\omega}(A,Q\Omega^f(B))$$ is the desired adjunction isomorphism. \end{proof}

\begin{proposition}\ Let $\A$ be an exact model category.
\vskip5pt
$(i)$ \ $\A_{cf}/\omega$ is a left coreflective subcategory of $(\A_f/\omega, \Omega^f, \triangle_f)$.
\vskip5pt
$(ii)$ \ $\A_{cf}/\omega$ is a right reflective subcategory of $(\A_c/\omega, \Sigma^c, \bigtriangledown_c)$.
\end{proposition}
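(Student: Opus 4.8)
The plan is to check the two defining conditions of Definition 2.3 for the inclusion functor $i:\A_{cf}/\omega\to\A_f/\omega$; part $(ii)$ of the proposition will then follow by the dual argument applied to $j:\A_{cf}/\omega\to\A_c/\omega$, with $\Sigma^c,\bigtriangledown_c$ and the reflector $R$ in place of $\Omega^f,\triangle_f$ and the coreflector $Q$. Condition $(i)$ of Definition 2.3 is immediate: $\A_{cf}/\omega$ is a \emph{full} subcategory of $\A_f/\omega$, so $i$ is fully faithful, and Lemma 4.3$(i)$ already supplies the right adjoint $Q$. Moreover the adjunction isomorphism of Lemma 4.3$(i)$ is induced by $q_{_A}:Q(A)\to A$, so the counit is $\varepsilon_A=\ul{q}_A$ (compare Remark 4.4). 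Thus the whole substance of the statement is condition $(ii)$ of Definition 2.3.

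For condition $(ii)$ I would take a morphism $\ul{f}:B\to A$ in $\A_f/\omega$ and spell out the two left triangles through the pullback description of $\triangle_f$: the one attached to $\ul{f}$ is $\Omega^f(A)\stackrel{\ul{\zeta}_f}\to{\rm PB}(f)\stackrel{\ul{\mu}_f}\to B\stackrel{\ul{f}}\to A$, while the one attached to $Q(\ul{f})=\ul{y}_f$ is $\Omega^fQ(A)\stackrel{\ul{\zeta}_{y_f}}\to{\rm PB}(y_f)\stackrel{\ul{\mu}_{y_f}}\to Q(B)\stackrel{\ul{y}_f}\to Q(A)$. The key claim is that the morphism on pullbacks induced by the morphism of cospans
\[
\xymatrix{
Q(B)\ar[r]^{y_f}\ar[d]_{q_{_B}} & Q(A)\ar[d]^{q_{_A}} & W_{Q(A)}\ar[l]_{\ p_{_{Q(A)}}}\ar[d]^{x_{q_{_A}}}\\
B\ar[r]^{f} & A & W_A\ar[l]_{p_{_A}}
}
\]
provides a filler $t$ for the (LT3)-diagram of Definition 2.3$(ii)$. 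Its right-hand square commutes because $fq_{_B}=q_{_A}y_f$; its middle square commutes because the induced map is, by construction, compatible with the projections onto $B$ and $Q(B)$; and its left square is the compatibility with the kernel inclusions $\ul{\zeta}_f,\ul{\zeta}_{y_f}$. Crucially, this induced morphism is precisely the one treated in Lemma 4.2$(i)$, which tells us that ${\rm PB}(y_f)\to{\rm PB}(f)$ is a weak equivalence in $\A$, so we may take $t$ to be its stable class.

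It then remains to see that $Q$ carries a weak equivalence to an isomorphism. For any weak equivalence $w:X\to Y$ in $\A_f$, the induced map $y_w:Q(X)\to Q(Y)$ satisfies $q_{_Y}y_w=wq_{_X}$; since $q_{_X},q_{_Y}$ and $w$ are weak equivalences, the two-out-of-three axiom forces $y_w$ to be a weak equivalence between the bifibrant objects $Q(X),Q(Y)\in\A_{cf}$, hence a homotopy equivalence by Whitehead's theorem \cite{Quillen67, Dwyer/Spalinski95}. Since homotopy equivalences become invertible in $\A_{cf}/\omega=\A_{cf}/{\stackrel{h}\sim}$, the morphism $Q(\ul{w})=\ul{y}_w$ is an isomorphism. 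Applying this with $w=t$ shows that $Q(t)$ is an isomorphism, which is exactly condition $(ii)$ of Definition 2.3.

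The step I expect to be the real obstacle is the bookkeeping of the second paragraph: verifying that the pullback-induced morphism genuinely fills the (LT3)-diagram, that is, that beyond the easy right and middle squares it is also compatible on the left with $\Omega^f(\varepsilon_A)$ and the kernel maps $\ul{\zeta}_f,\ul{\zeta}_{y_f}$, where the sign convention of the rotation axiom and the identification of $\Omega^f$ on the counit must be tracked. Once the two pullback triangles have been matched with the cospan morphism above, invoking Lemma 4.2$(i)$ together with the Whitehead step is routine, and part $(ii)$ of the proposition follows by dualizing everything to $\Sigma^c$, $\bigtriangledown_c$ and the reflector $R$.
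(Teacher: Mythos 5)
Your proposal is correct and follows essentially the same route as the paper: the paper likewise reduces to condition $(ii)$ of Definition~2.3 via Lemma~4.3, takes the filler $t:{\rm PB}(y_f)\to{\rm PB}(f)$ induced by the universal property of the pullback, and invokes Lemma~4.2 to see that $t$ is a weak equivalence, whence $Q(\ul{t})$ is an isomorphism. Your extra paragraph justifying that $Q$ sends weak equivalences to isomorphisms (two-out-of-three plus Whitehead between bifibrant objects) merely makes explicit a step the paper leaves implicit.
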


\begin{proof} \ We only prove $(i)$ and leave the dual case to the reader.

$(i)$. \ By Lemma 4.3, $A_{cf}/\omega$ is a coreflective subcategory of $\A_f/\omega$ with coreflector $Q:\A_f/\omega\to \A_{cf}/\omega$. So we only need to check condition $(ii)$ of Definition 2.3. Given any morphism $f:B\to A$ in $\A_f$, by Lemma 4.1, there is a commutative diagram of distinguished left triangles in $\A_f/\omega$:
\[
\xymatrix{
\Omega^f  Q(A)\ar[r]^{\ \ \ \ul{\zeta}_{y_{_f}}} \ar[d]_{\Omega^f(\ul{q}_{A})}& {\rm PB}(y_{_f})\ar[r]^{\ul{\mu}_{y_{_f}}}\ar[d]_{\ul{t}} & Q(B)\ar[d]_{\ul{q}_{B}} \ar[r]^{\ul{y}_{f}} & Q(A)\ar[d]^{\ul{q}_{A}}  \\
\Omega^f(A) \ar[r]^{\ul{\zeta}_f} & {\rm PB}(f)\ar[r]^{\ul{\mu}_f} & B \ar[r]^{\ul{f}} & A.}
\]
where $t$ is the induced morphism by the universal property of pullbacks. By Lemma 4.2, $t$ is a weak equivalence and thus $Q(\ul{t})$ is an isomorphism in $\A_{cf}/\omega$.
\end{proof}
We will use $(Q\Omega^f i, \triangle_{cf})$ to denote the induced left triangulated structure on $\A_{cf}/\omega$ from $\A_f/\omega$ and use $(R\Sigma^cj, \bigtriangledown_{cf})$ to denote the induced right triangulated structure on $\A_{cf}/\omega$ from $\A_c/\omega$.

\begin{theorem} \ Let $\A$ be an exact model category. Then $(\A_f/\omega, \A_{cf}/\omega,
 \A_c/\omega)$ is a prile of one-sided triangulated categories.
\end{theorem}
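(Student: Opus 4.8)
The plan is to verify directly the three conditions of Definition 2.6 for the triple $(\A_f/\omega,\A_{cf}/\omega,\A_c/\omega)$, drawing on Lemma 4.1, Proposition 4.6 and Lemma 4.5. Condition $(i)$ is immediate: Lemma 4.5 asserts exactly that $(R\Sigma^c j, Q\Omega^f i)$ is an adjoint pair on $\A_{cf}/\omega$, the loop functor $Q\Omega^f i$ being right adjoint to the suspension functor $R\Sigma^c j$, and its counit $\varepsilon\colon R\Sigma^c j\,Q\Omega^f i\to\id$ is precisely the natural transformation occurring in the diagrams of $(ii)$ and $(iii)$. The construction is moreover self-dual: passing to the opposite exact model category $\A^{\op}$ interchanges $\A_f/\omega$ with $\A_c/\omega$, the functor $\Omega^f$ with $\Sigma^c$, the class $\triangle_f$ with $\bigtriangledown_c$, and the counit $\varepsilon$ with the unit $\eta$. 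Under this duality condition $(iii)$ becomes condition $(ii)$, so it suffices to establish $(ii)$.

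To prove $(ii)$ I first pass to concrete models in $\A$. By Remark 4.4 the unit of $(i,Q)$ and the counit of $(R,j)$ are identities, so by Proposition 2.4 and the construction of $\triangle_{cf}$ the lower (left) triangle comes, up to isomorphism, from a morphism $f\colon B\to A$ in $\A_{cf}$ via the pullback diagram $(*)$: it is $Q\Omega^f i(A)\xrightarrow{h}C\xrightarrow{g}B\xrightarrow{f}A$, where $0\to\Omega^f(A)\xrightarrow{\zeta_f}{\rm PB}(f)\xrightarrow{\mu_f}B\to 0$ is a conflation with $\mu_f\zeta_f=0$ on the nose, and $C=Q({\rm PB}(f))$, $h=Q(\ul\zeta_f)$, $g=Q(\ul\mu_f)$. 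Dually, the upper (right) triangle is the cofibre sequence of the same morphism $h$, realised by the pushout of $\zeta_f$ along the left $\omega$-approximation $\Omega^f(A)\to W^{\Omega^f(A)}$; thus $D=R({\rm PO}(h))$ and $0\to C\to{\rm PO}(h)\to R\Sigma^c j\,Q\Omega^f i(A)\to 0$ is a conflation whose cokernel map is $f'$. Lemma 4.2 and its dual allow me to replace $Q$ and $R$ by honest pullbacks and pushouts and to carry out all computations in $\A$, reading the outcome in $\A_{cf}/\omega$.

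The filler $s$ is then produced in two steps. Since consecutive morphisms of a left triangle compose to zero we have $gh=0$, while $g'\colon C\to D$ is a weak cokernel of $h$ by the defining (homological) property of right triangles; hence $g$ factors as $g=sg'$, giving the commutativity of the middle square, and concretely $s$ is the morphism out of the pushout ${\rm PO}(h)$ given by the pair $(\mu_f,0)$. The remaining, genuinely hard point is the right-hand square $fs=\varepsilon_A f'$, and this is the main obstacle: one checks that on the nose the two composites do not agree, $fs$ being supported on the ${\rm PB}(f)$-summand and $\varepsilon_A f'$ on the $W^{\Omega^f(A)}$-summand of the pushout, so the equality can hold only in $\A_{cf}/\omega$. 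The plan here is to compute $\varepsilon_A$ explicitly from the adjunction isomorphism of Lemma 4.5, where it is the transpose of $\id_{Q\Omega^f i(A)}$ assembled from the defining conflations of $R\Sigma^c j\,Q\Omega^f i(A)$ and $Q\Omega^f i(A)$ together with the approximations $r$ and $q$, and then to fit the suspension conflation of $\Omega^f(A)$ and the pullback data of $f$ into a single commutative diagram in $\A$ (a gluing/nine-lemma diagram of the type already used in the proof of Lemma 4.2). In that diagram both $fs$ and $\varepsilon_A f'$ are exhibited as the morphism induced on the common cokernel $R\Sigma^c j\,Q\Omega^f i(A)$, their difference factoring through the object $W^{\Omega^f(A)}\in\omega$ and hence vanishing in $\A_{cf}/\omega$, while the residual freedom in $s$ afforded by the weak-cokernel property fixes the normalisation so that the middle square is preserved simultaneously. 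This establishes $(ii)$, and $(iii)$ follows by the duality noted above; therefore $(\A_f/\omega,\A_{cf}/\omega,\A_c/\omega)$ is a prile of one-sided triangulated categories.
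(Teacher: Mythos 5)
Your proposal follows essentially the same route as the paper: condition $(i)$ is Lemma 4.5, condition $(iii)$ is reduced to $(ii)$ by duality, and for $(ii)$ the filler is the morphism out of the pushout determined by the pair $(\mu_f,0)$ (the paper's $n$, lifted to $R({\rm PO}(y_{\zeta_f}))$ via the reflection isomorphism), with the right-hand square checked only up to a summand factoring through $\omega$ using the explicit description of the counit $\varepsilon_A$ from Lemma 4.5. The paper executes the final computation you only sketch via the on-the-nose identity $fn-p_{_A}m+\lambda^{q_{_{\Omega^f(A)}}}w=0$, the discrepancy $p_{_A}m$ factoring through $W_A\in\omega$ (rather than $W^{\Omega^f(A)}$ as you suggest), but this is the same argument.
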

\begin{proof}
By Lemma 4.1, $(\A_f/\omega, \Omega^f, \triangle_f)$ is a left triangulated category and $(\A_c/\omega, \Sigma^c, \bigtriangledown_c)$ is a right triangulated category. By Proposition 4.6, $\A_{cf}/\omega$ is a left coreflective subcategory of $\A_f/\omega$ with induced left triangulated structure $(Q\Omega^f i, \triangle_{cf})$ and $\A_{cf}/\omega$ is a right reflective subcategory of $\A_c/\omega$ with induced right triangulated structure $(R\Sigma^c j, \bigtriangledown_{cf})$. By Lemma 4.5, $(R\Sigma^c j, Q\Omega^f i)$ is an adjoint pair. So we only need to check conditions $(ii)$ and $(iii)$ of Definition 2.6.

From the proof of Lemma 4.5, we know that the counit $\varepsilon$ of the adjunction $(R\Sigma^c j, Q\Omega^f i)$ is defined by the equality $$\varepsilon_{_A} r^{\Sigma^c Q\Omega^f(A)}=-\lambda^{q_{_{\Omega^f(A)}}}$$
for any $A\in \A_{cf}$.

For condition $(ii)$ of Definition 2.6, without loss of generality, we may only consider the following diagram (see Proposition 2.4, Lemma 4.1 and Remark 4.4):
\[
\xymatrix{
Q\Omega^f(A)\ar[r]^{\ \ \ul{y}_{{\zeta_{_f}}}\ \ }\ar@{=}[d] & Q({\rm PB}(f)) \ar[r]^{\ul{r}^{{\rm PO}(y_{\zeta_{_f}})}\ul{v}}\ar@{=}[d] & \ \ R({\rm PO}(y_{\zeta_{_f}})) \ar[r]^{\ul{a}^{w}} & R\Sigma Q\Omega (A)\ar[d]^{\ul{\varepsilon}_{A}}   \\
Q \Omega^f(A) \ar[r]^{\ \  \ul{y}_{{\zeta_{_f}}}\ \ } & Q({\rm PB}(f))\ar[r]^{\ \ \ \ \ \ul{\mu}_f \ul{q}_{{\rm PB}(f)}} & B \ar[r]^{\ul{f}} & A }
\]
for a morphism $f:A\to B $ in $\A_{cf}$, where $y_{\zeta_{_f}}, v, w$ fit into a pushout diagram
\[
\xymatrix{
0  \ar[r] & Q\Omega^f(A) \ar[r]^{\ \ \tau^{Q\Omega^f(A)} \ \ }\ar[d]_{y_{\zeta_{_f}}} & W^{Q\Omega^f(A)}\ar[d]_{u} \ar[r]^{ \pi^{Q\Omega^f(A)}} & \Sigma^c Q\Omega^f(A) \ar@{=}[d] \ar[r]& 0\\
0  \ar[r] & Q({\rm PB(f)}) \ar[r]^{v} & {\rm PO}(y_{\zeta_{_f}}) \ar[r]^{w} & \Sigma^c Q\Omega^f(A) \ar[r] & 0}
\]
with the first row the assigned left $\omega$-approximation of $Q\Omega^f(A)$.

Since $$\mu_{_f}q_{_{{\rm PB}(f)}}y_{\zeta_{_f}}=\mu_{_f}\zeta_f q_{_{\Omega^f(A)}}=0$$
there is a morphism $n: {\rm PO}(y_{\zeta_{_f}})\to B$ such that
$$nv=\mu_{_f}q_{_{{\rm PB}(f)}}\ \mbox{and} \ nu=0$$
by the universal property of pushouts. Similarly, there is a morphism $m:{\rm PO}(y_{\zeta_{_f}})\to W_A$ such that $mv=\theta_{f}q_{_{{\rm PB }(f)}}$ and $mu=z^{q_{_{\Omega^f(A)}}}$ (for the notation $z^{q_{_{\Omega^f(A)}}}$, see the proof of Lemma 4.5).

Now since $$(fn- p_{_A} m+\lambda^{q_{_{\Omega^f(A)}}}w)v=0=(fn- p_{_A} m+\lambda^{q_{_{\Omega^f(A)}}}w)u$$ we have $$fn- p_{_A} m+\lambda^{q_{_{\Omega^f(A)}}}w=0$$ by the universal property of pushouts. So $$\ul{f}\ul{n}=-\ul{\lambda}^{q_{_{\Omega^f(A)}}\alpha}\ul{w}=\ul{\varepsilon}_A \ul{r}^{\Sigma^c Q\Omega^f(A)} \ul{w}= \ul{\varepsilon}_A\ \ul{a}^w \ul{r}^{{\rm PO}(y_{\zeta_{_f}})}$$
in $\A_{c}/\omega$.

By the proof of $(ii)$ of Lemma 4.3, the map $\Hom_{\A_c/\omega}(r^{{\rm PO}(y_{\zeta_{_f}})}, B)$ is an isomorphism. So there is a morphism $s: R{\rm PO}(y_{\zeta_{_f}})\to B$ such that $\ul{s} \ \ul{r}^{{\rm PO}(y_{\zeta_{_f}})}= \ul{n}$ in $\A_{c}/\omega$ and thus $\ul{f} \ \ul{s}=\ul{\varepsilon}_{A} \ul{a}^{w}$. Then $\ul{s}$ is the desired filler.

Dually, one can verify condition $(iii)$ of Definition 2.6. \end{proof}

By Theorem 2.7, we have:
\begin{corollary} \ Let $\A$ be an exact model category. Then $\A_{cf}/\omega $ inherits a pretriangulated structure from $\A_f/\omega$ and $\A_c/\omega$.
\end{corollary}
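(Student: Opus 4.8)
The plan is to deduce this directly from the general machinery already assembled, since the corollary is a formal consequence of the two main theorems. By Theorem 4.7, the triple $(\A_f/\omega, \A_{cf}/\omega, \A_c/\omega)$ is a prile of one-sided triangulated categories, with $i:\A_{cf}/\omega\hookrightarrow\A_f/\omega$ the left coreflective inclusion (coreflector $Q$) and $j:\A_{cf}/\omega\hookrightarrow\A_c/\omega$ the right reflective inclusion (reflector $R$). First I would invoke Theorem 2.7 for this particular prile: it asserts that the quadruple $(Q\Omega^f i,\ R\Sigma^c j,\ \triangle_{cf},\ \bigtriangledown_{cf})$ is a pretriangulated structure on $\A_{cf}/\omega$, where $\triangle_{cf}$ and $\bigtriangledown_{cf}$ are the induced classes of one-sided triangles furnished by Proposition 2.4.

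To make this application of Theorem 2.7 legitimate I only need the three hypotheses packaged in Definition 2.6 to hold, and these are exactly the content of Theorem 4.7 together with the preceding lemmas. Lemma 4.1 provides the ambient left and right triangulated structures $(\A_f/\omega,\Omega^f,\triangle_f)$ and $(\A_c/\omega,\Sigma^c,\bigtriangledown_c)$; Proposition 4.6 verifies that $i$ is left coreflective and $j$ is right reflective, so that Proposition 2.4 manufactures the induced one-sided structures $(Q\Omega^f i,\triangle_{cf})$ and $(R\Sigma^c j,\bigtriangledown_{cf})$; Lemma 4.5 supplies the adjoint pair $(R\Sigma^c j, Q\Omega^f i)$ required in part $(i)$; and the compatibility conditions $(ii)$ and $(iii)$ are precisely what the body of the proof of Theorem 4.7 establishes.

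Finally, by Remark 2.8 the phrase ``$\A_{cf}/\omega$ inherits a pretriangulated structure from $\A_f/\omega$ and $\A_c/\omega$'' is defined to mean exactly that this structure is the one produced by Theorem 2.7, so the statement follows at once. I do not expect any genuine obstacle here: all the real work, namely constructing the induced one-sided triangulated structures and verifying the axioms of Definition 2.1 and the compatibility axioms of Definition 2.6, has already been carried out in Proposition 2.4, Lemma 4.5, and Theorem 4.7, so the proof of the corollary reduces to assembling these ingredients and citing Theorem 2.7.
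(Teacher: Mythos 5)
Your proposal is correct and matches the paper exactly: the paper derives this corollary immediately from Theorem 2.7 applied to the prile $(\A_f/\omega, \A_{cf}/\omega, \A_c/\omega)$ established in Theorem 4.7, which is precisely your argument. Your additional remarks tracing the hypotheses of Definition 2.6 back to Lemma 4.1, Proposition 4.6, and Lemma 4.5 are accurate but not needed beyond the citation of Theorems 4.7 and 2.7.
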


\subsection{One-sided exact functors}
By Proposition 4.4 of \cite{Gillespie11}, if $\ul{f}=\ul{g}$ in $\A_f/\omega$, then $f\stackrel{l}\sim g$, and if $\ul{f}=\ul{g}$ in $\A_c/\omega$, then $f\stackrel{r}\sim g$. So, there is an induced functor $I_f: \A_f/\omega\to \A_f/{\stackrel{l}\sim}$ which is the identity on objects and sends $\ul{f}$ to $[f]_l$, the set of left homotopy equivalence classes of $f$. Dually, there is an induced functor $I_c:\A_c/\omega \to \A_c/{\stackrel{r}\sim}$.

 Recall that a functor $F: \C_1\to \C_2$ between left triangulated categories $(\C_1, \Omega_1, \triangle_1)$ and $(\C_2, \Omega_2, \triangle_2)$ {\it left exact} if there is a natural isomorphism $\phi: \Omega_2F\stackrel{\cong}\to F \Omega_1$ such that for any distinguished left triangle $\Omega_1(A)\stackrel{h}\to C\stackrel{g}\to B\stackrel{f}\to A$ in $\C_1$, the diagram $\Omega_2F(A)\stackrel{F(h)\phi_A}\To F(C)\stackrel{F(g)}\to F(B)\stackrel{F(f)}\to F(A)$ is a distinguished left triangle in $\C_2$. Dually, one defines {\it right exact} functors between right triangulated categories.

\begin{proposition} \ $(i)$ \ The composition $\A_f/\omega\stackrel{I_f}\to \A_f/{\stackrel{l}\sim}\stackrel{\bar{\gamma}_f}\to \Ho(\A_f)$ is a left exact functor between left triangulated categories.
\vskip5pt
$(ii)$ \ The composition $\A_c/\omega\stackrel{I_c}\to \A_c/{\stackrel{r}\sim}\stackrel{\bar{\gamma}_c}\to \Ho(\A_c)$ is a right exact functor between right triangulated categories.
\end{proposition}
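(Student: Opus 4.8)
The plan is to analyse the composite $F := \bar{\gamma}_f I_f \colon \A_f/\omega \to \Ho(\A_f)$. This is a well-defined functor because $\ul f = \ul g$ in $\A_f/\omega$ forces $f \stackrel{l}\sim g$, and left homotopic maps are identified in $\Ho(\A_f)$; on objects $F$ is the identity, and on a morphism $\ul f$ it returns the image of $f$ in $\Ho(\A_f)$. I will regard $\Ho(\A_f)$ as a left triangulated category via the restriction, along the full embedding $\Ho(\A_f) \hookrightarrow \Ho(\A)$ of Section 3.4, of Quillen's loop functor (which I denote $\Omega$) and his fibration sequences (the distinguished left triangles). The whole argument rests on one observation: since $W_A \in \omega \subseteq \A_{triv}$, the object $W_A$ is weakly equivalent to $0$, while $p_{_A}\colon W_A \to A$ is a deflation with fibrant kernel, hence a fibration; moreover $0 \to W_A$ is a weak equivalence. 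Thus $0 \stackrel{\sim}\to W_A \twoheadrightarrow A$ is a legitimate fibration replacement of $0 \to A$, so $W_A$ plays the role of a contractible total space over $A$.

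First I would produce the natural isomorphism $\phi$. Applying the observation to the defining short exact sequence $0 \to \Omega^f(A) \xrightarrow{\iota_A} W_A \xrightarrow{p_{_A}} A \to 0$, the fibre $\Omega^f(A) = \ker p_{_A}$ of the fibration $p_{_A}$ with contractible total space is the homotopy fibre of $0 \to A$, which is exactly $\Omega(A)$ in $\Ho(\A_f)$. This yields an isomorphism $\phi_A \colon \Omega F(A) = \Omega(A) \xrightarrow{\cong} F\Omega^f(A) = \Omega^f(A)$. Naturality of $\phi$ in $\A_f/\omega$ I would obtain from the functoriality of the assignment $A \mapsto (W_A, p_{_A})$, encoded by the comparison maps $x_f, \kappa_f$ in the diagram defining $\Omega^f$, together with the functoriality of the homotopy fibre in $\Ho(\A_f)$.

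Next I would match distinguished left triangles with fibration sequences. A distinguished left triangle is isomorphic to one arising from diagram $(*)$, in which $\mathrm{PB}(f)$ is the pullback of $f\colon B \to A$ along the fibration $p_{_A}\colon W_A \to A$. Since $W_A$ is contractible, this pullback computes the homotopy fibre of $f$; the map $\ul\mu_f$ is the homotopy-fibre projection onto $B$, and $\ul\zeta_f$ is the inclusion of the fibre of $p_{_A}$ over $0$, that is, the canonical connecting map $\Omega(A) \to \mathrm{hofib}(f)$ after the identification $\phi_A$. Consequently the image under $F$ of the triangle $\Omega^f(A) \xrightarrow{\ul\zeta_f} \mathrm{PB}(f) \xrightarrow{\ul\mu_f} B \xrightarrow{\ul f} A$, read through $\phi_A$ (i.e. the composite $F(\ul\zeta_f)\phi_A$), is precisely Quillen's fibration sequence attached to $f$, hence a distinguished left triangle in $\Ho(\A_f)$. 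This establishes the left exactness of $F$, proving $(i)$; part $(ii)$ follows by the dual argument, replacing $W_A, p_{_A}, \mathrm{PB}(f)$ by the cofibration $X \to W^X$ type data and the pushouts $\mathrm{PO}(g)$, and homotopy fibres by homotopy cofibres, so that $\Sigma^c(X)$ is identified with the suspension $\Sigma(X)$ and distinguished right triangles with cofibration sequences.

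The hard part will be the precise comparison between the homotopy-theoretic data (Quillen's loop functor and fibration sequences, built from path objects) and the exact-structure data ($\Omega^f$ and the pullbacks $\mathrm{PB}(f)$): one must verify that the two models of the homotopy fibre agree coherently, that $\phi$ is genuinely natural and compatible with the connecting maps, and that no sign discrepancy is introduced when transporting the triangle structure. Once $W_A$ is recognised as a contractible total space making $\mathrm{PB}(f)$ a homotopy fibre, these reduce to bookkeeping verifications rather than substantial obstacles.
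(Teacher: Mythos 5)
Your outline follows the same route as the paper: use the triviality of $W_A$ and the fact that $p_{_A}$ is a fibration to identify $\Omega^f(A)=\ker p_{_A}$ with Quillen's loop object, identify $\mathrm{PB}(f)$ with a fibre, and match the stable triangles with Quillen's fibration sequences. The difficulty is that the items you set aside as ``bookkeeping'' constitute essentially the entire proof, and one of them is phrased in a way that does not yet connect to Quillen's definitions. Quillen's distinguished left triangles in $\Ho(\A_f)$ are not defined as homotopy-fibre sequences of arbitrary maps: they are attached to \emph{fibrations} $p$ with fibre $\ker p$, and the connecting map $\Omega(A)\to\ker p$ is produced by the group action of the loop object on the fibre. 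So the observation that $\mathrm{PB}(f)$ ``computes the homotopy fibre of $f$'' does not by itself exhibit $\Omega^f(A)\stackrel{\ul{\zeta}_f}\to\mathrm{PB}(f)\stackrel{\ul{\mu}_f}\to B\stackrel{\ul{f}}\to A$ as one of Quillen's fibration sequences, nor does it determine the sign of the connecting map.

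The paper discharges these points with two explicit devices that your plan would need. First, it notes that $A\to A\oplus W_A\to A\oplus A$ (with structure maps $\left(\begin{smallmatrix}1\\0\end{smallmatrix}\right)$ and $\left(\begin{smallmatrix}1&p_{_A}\\1&0\end{smallmatrix}\right)$) is a \emph{very good path object}, so that $\Omega_{\Ho(\A_f)}(A)=\ker p_{_A}=\Omega^f(A)$ on the nose --- your $\phi$ becomes the identity, which settles naturality --- and it computes the group action on the fibre of a fibration $p$ to be $\left(\begin{smallmatrix}1\\-\xi_p\end{smallmatrix}\right)$, which is exactly where the sign $-\gamma_f(\xi_p)$ in the connecting map comes from. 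Second, it replaces $f$ by the genuine fibration $(f,p_{_A}):B\oplus W_A\to A$, whose kernel is $\mathrm{PB}(f)$, and uses that $\left(\begin{smallmatrix}1\\0\end{smallmatrix}\right):B\to B\oplus W_A$ and the comparison map $u:\ker f\to\mathrm{PB}(f)$ become isomorphisms in $\Ho(\A_f)$ to transport Quillen's fibration sequence onto the image of the stable triangle; a reduction via \cite[Proposition 2.10]{Beligiannis/Marmaridis94} limits which triangles must be checked. None of this is deep, but until the path object is pinned down and the action computed, the sign and the naturality of $\phi$ are genuinely open, so as written your argument is a correct plan rather than a proof.
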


\begin{proof} \ We only prove $(i)$ since the statement $(ii)$ can be proved dually.

$(i)$ \ We denote by $\Omega_{\Ho(\A_f)}$ the loop functor of $\Ho(\A_f)$. We recall the left triangulated structure of $\Ho(\A_f)$ as constructed in \cite[Chapter I, Sections 2-3]{Quillen67}. For every object $A\in \A_f$, $A\oplus W_A$ is a very good path object for $A$: $$A\stackrel{\left(\begin{smallmatrix}
1 \\
0
\end{smallmatrix}\right)}\to A\oplus W_A\stackrel{\left(\begin{smallmatrix}
1 & p_{_A}\\
1 & 0
\end{smallmatrix}\right)}\to A\oplus A \qquad (**).$$
Thus $$\Omega_{\Ho(\A_f)}(A)=\ker\left(\begin{smallmatrix}
1 & p_{_A}\\
1 & 0
\end{smallmatrix}\right)=\ker p_{_A}=\Omega^f(A)$$ and for each morphism $f:B\to A$, $\Omega_{\Ho(\A_f)}(f)=\gamma_f(\kappa_f)$. Given any fibration $p:B\to A$ in $\A_f$, we have a commutative diagram with exact rows in $\A:$
\[
\xymatrix{
0 \ar[r]& \ker p_{_A}\ar[r]^{\iota_{_A}}\ar[d]_{\xi_p} & W_A\ar[d]_{\delta_g} \ar[r]^{p_{_{A}}} & A\ar@{=}[d]\ar[r]& 0  \\
0 \ar[r] & \ker p \ar[r]^{\iota_p} & B \ar[r]^{p} & A \ar[r]& 0.}
\]
Since $\A$ is an additive category, along the construction of the group action in Chapter I, Section 1.3, Proposition 1 of \cite{Quillen67}, we know that the group action of $\Omega_{\Ho(\A_f)}(A)$ on $\ker p$ is $\left(\begin{smallmatrix}
1 \\
-\xi_p
\end{smallmatrix}\right): \ker p\oplus \Omega_{\Ho(\A_f)}\to \ker p$, where $\xi_p$ is a morphism as defined in the above diagram. For, in our case, the localization $\gamma_f$ is additive and so we can always take the very good path objects for $A$ and $B$ as the form of $(**)$. For more details, see \cite[Proposition I.1.3]{Quillen67}, \cite[Proposition I.3]{Brown73} and \cite[Theorem 6.2.1, Remark 7.1.3]{Hovey99}.

So the distinguished left triangles in $\Ho(\A_f)$ are those which are isomorphic to one of the form $$\Omega_{\Ho(\A_f)}(A)\stackrel{-\gamma_f(\xi_p)}\to \ker p\stackrel{\gamma_f(\iota_p)}\to B\stackrel{\gamma_f(p)}\to A$$
where $p$ is a fibration with a kernel $\iota_p$. These distinguished left triangles together with the loop functor $\Omega_{\Ho(\A_f)}$ is the left triangulated structure on $\Ho(\A_f)$ as constructed in \cite[Proposition I.1.3.5]{Quillen67} and \cite[Subsection 6.3]{Hovey99}.

As the functor $\bar{\gamma}_fI_f$ is the identity on objects, by Proposition 2.10 of \cite{Beligiannis/Marmaridis94}, we only need to show that for any induced left triangle $\Omega^f(A)\stackrel{-\ul{\xi}_f}\to \ker f \stackrel{\ul{\iota}_f}\to B\stackrel{\ul{f}}\to A$ in $\A_f/\omega$,
the diagram $\Omega_{\Ho(\A_f)}(A)\stackrel{-\gamma_f(\xi_f)}\to \ker f\stackrel{\gamma_f(\iota_f)}\to B\stackrel{\gamma_f(f)}\to A$ is a distinguished triangle in $\Ho(\A_f)$.

The morphism $(f, p_{_A}): B\oplus W_A\to A$ is a fibration which has a kernel ${\rm PB}(f)$ as constructed in the commutative diagram $(**)$. We have a commutative diagram:
\[\xymatrix{\Omega^f(A) \ar[r]^{-\xi_f} \ar@{=}[d] & \ker f\ar[r]^{\iota_f}\ar[d]^u & B\ar[r]^f \ar[d]^{\left(\begin{smallmatrix}
1 \\
0
\end{smallmatrix}\right)} & A \ar@{=}[d]\\
\Omega^f(A)\ar[r]^{-u\xi_f}& {\rm PB}(f)\ar[r]^{\left(\begin{smallmatrix}
\mu_f \\
-\theta_f
\end{smallmatrix}\right)}& B\oplus W_A \ar[r]^{\ \ \ \ (f, p_{_A})}& A}
\]
where $u:\ker f\to {\rm PB}(f)$ is the kernel of $\theta_f$ satisfying $\mu_f u=\iota_f$ and the second row is the distinguished left triangle of $(f, p_{_A})$. The morphism $u$ is an isomorphism in $\A_f/\omega$ by the proof of Proposition 2.10 of \cite{Beligiannis/Marmaridis94}, so it is also an isomorphism in $\Ho(\A_f)$ by Proposition 4.4 of \cite{Gillespie11} . Since $\left(\begin{smallmatrix}
1 \\
0
\end{smallmatrix}\right)$ is a weak equivalence, it is an isomorphism in $\Ho(\A_f)$. So the diagram $\Omega_{\Ho(\A_f)}(A)\stackrel{-\gamma_f(\xi_f)}\to \ker f\stackrel{\gamma_f(\iota_f)}\to B\stackrel{\gamma_f(f)}\to A$ is a distinguished left triangle in $\Ho(\A_f)$.
\end{proof}

Recall that the homotopy category $\Ho(\A)$ is a pretriangulated category in the sense of Definition 2.5 as proved by Quillen; see \cite[Chapter I]{Quillen67}.

\begin{corollary} \ The pretriangulated structure on $\A_{cf}/\omega$ given in Theorem 2.8 is equivalent to the one as constructed on $\Ho(\A)$ by Quillen.
\end{corollary}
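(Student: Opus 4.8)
The plan is to promote the equivalence $\bar{\gamma}\colon \A_{cf}/\omega \xrightarrow{\sim} \Ho(\A)$ from the diagram of Subsection 3.4 to an equivalence of pretriangulated categories, carrying the structure $(Q\Omega^f i, R\Sigma^c j, \triangle_{cf}, \bigtriangledown_{cf})$ of Corollary 4.8 onto the pretriangulated structure Quillen constructs on $\Ho(\A)$. Concretely, I must produce natural isomorphisms intertwining $Q\Omega^f i$ with the loop functor $\Omega_{\Ho(\A)}$ and $R\Sigma^c j$ with the suspension functor $\Sigma_{\Ho(\A)}$, compatible with the adjunction of Lemma 4.5 and Quillen's loop--suspension adjunction, and then show that $\bar{\gamma}$ maps $\triangle_{cf}$ onto the distinguished left triangles and $\bigtriangledown_{cf}$ onto the distinguished right triangles of $\Ho(\A)$.

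First I would record that $\bar{\gamma}$ factors, up to natural isomorphism, through both the fibrant and the cofibrant pictures: along $\A_{cf}/\omega \hookrightarrow \A_f/{\stackrel{l}\sim}$ followed by $\bar{\gamma}_f$ and the equivalence $\Ho(\A_f)\xrightarrow{\sim}\Ho(\A)$ it agrees with $\bar{\gamma}$, and dually on the cofibrant side. This lets me import Proposition 4.9: the composite $\bar{\gamma}_f I_f\colon \A_f/\omega \to \Ho(\A_f)$ is left exact, so it carries $\Omega^f$ to $\Omega_{\Ho(\A_f)}$ up to a natural isomorphism and sends each triangle of $\triangle_f$ to a distinguished left triangle of $\Ho(\A_f)$, hence of $\Ho(\A)$.

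The crucial point is that passing from $\triangle_f$ to the coreflected class $\triangle_{cf}$ changes nothing in $\Ho(\A)$. Indeed, a $\triangle_{cf}$-triangle attached to $f\colon B\to A$ in $\A_{cf}$ is obtained from the $\triangle_f$-triangle $\Omega^f(A)\to {\rm PB}(f)\to B\xrightarrow{f} A$ by applying the coreflector $Q$; on the bifibrant objects $A,B$ the functor $Q$ is the identity by Remark 4.4, while the comparison maps $q_{\Omega^f(A)}\colon Q\Omega^f(A)\to \Omega^f(A)$ and $q_{{\rm PB}(f)}\colon Q({\rm PB}(f))\to {\rm PB}(f)$ are weak equivalences by construction, hence are inverted in $\Ho(\A)$. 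Therefore $\bar{\gamma}$ of a $\triangle_{cf}$-triangle is isomorphic in $\Ho(\A)$ to $\bar{\gamma}_f I_f$ of the corresponding $\triangle_f$-triangle, which is distinguished; the same comparison simultaneously identifies $Q\Omega^f i$ with $\Omega_{\Ho(\A)}$. Dually, Proposition 4.9(ii) together with the weak equivalences $r^{\Sigma^c(-)}$ handles $\bigtriangledown_{cf}$ and identifies $R\Sigma^c j$ with $\Sigma_{\Ho(\A)}$.

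It remains to match the adjunctions, and this is where I expect the main obstacle. I would verify that the adjunction isomorphism built in the proof of Lemma 4.5 --- defined through the approximation sequences and the pullback/pushout squares of Subsection 4.2 --- is transported by $\bar{\gamma}$ onto Quillen's loop--suspension adjunction on $\Ho(\A)$, which is instead defined via path and cylinder objects. The bridge is again supplied by the weak equivalences relating the exact-category constructions to the model-category ones, as in Lemma 4.2 and the computation of $\Omega_{\Ho(\A_f)}$ in Proposition 4.9; the delicate part is tracking the units and counits through these identifications so that the two adjunction isomorphisms genuinely agree rather than merely both exist. Once the loop and suspension functors, the two classes of distinguished triangles, and the adjunction all correspond under $\bar{\gamma}$, the compatibility axioms (iii) and (iv) of Definition 2.5 transport automatically, since they are phrased solely in terms of this data, and we conclude that $\bar{\gamma}$ is an equivalence of pretriangulated categories.
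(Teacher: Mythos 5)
Your proposal follows essentially the same route as the paper: invoke the known equivalence $\A_{cf}/\omega\simeq\Ho(\A)$, reduce to exactness, and combine Proposition 4.9 with the observation that the approximation maps $q_{\Omega^f(A)}$, $r^{\Sigma^c(A)}$ are weak equivalences, so that $Q\Omega^f\cong\Omega^f$ and $R\Sigma^c\cong\Sigma^c$ in $\Ho(\A)$ and the classes $\triangle_{cf}$, $\bigtriangledown_{cf}$ go to Quillen's distinguished triangles. The compatibility of the two loop--suspension adjunctions, which you rightly flag as the delicate point, is in fact passed over silently in the paper's own (very terse) proof, so your version is if anything more careful on exactly the step the paper leaves implicit.
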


\begin{proof} \  The functor $\A_{cf}/\omega\to \Ho(\A)$ by sending $A$ to $A$ and $\ul{f}:A\to B$ to $\gamma(f)$ is an equivalence of categories. This has been shown in \cite[Theorem VII.4.2]{Beligiannis/Reiten07}, \cite[Propositions 4.3,4.7]{Gillespie11} and \cite[Proposition 1.1.14]{Becker12}. So we only need to check that it is exact. Note that in $\Ho(\A)$, we have natural isomorphisms of functors $Q\Omega^f\cong \Omega^f$ and $\Sigma^c\cong R\Sigma^c$, thus the assertion follows from Proposition 4.9 directly since the pretriangulated structure on $\Ho(\A)$ is constructed via the one-sided triangulated structures of $\Ho(\A_f)$ and $\Ho(\A_c)$; see \cite[Theorem I.2.2, Proposition I.3.3, Proposition I.3.5]{Quillen67}. \end{proof}

\vskip10pt

\noindent{\bf Acknowledgments} \  The author would like to thank Xiao-Wu Chen for his stimulating discussions and encouragements. The author would especially like to thank Henning Krause for his great help. He thanks Xiaoxiang Yu, Guodong Zhou, Xiaojin Zhang, Yu Zhou for their helpful comments.

\vskip10pt

\vskip 10pt

 {\footnotesize \noindent Zhi-Wei Li \\
 School of Mathematics and Statistics,\\
  Jiangsu Normal University,
Xuzhou 221116, Jiangsu, PR China.\\
E-mail address: zhiweili$\symbol{64}$jsnu.edu.cn. }

\end{document}